\newtheorem{theorem}{Theorem}[section]
\newtheorem{lemma}[theorem]{Lemma}
\theoremstyle{definition}
\newtheorem{definition}[theorem]{Definition}
\newtheorem{example}[theorem]{Example}
\newtheorem{remark}[theorem]{Remark}
\date{\today}
\begin{document}
\title[Energy bounds for weighted spherical codes and designs]{Energy bounds for weighted spherical codes and designs \\ via linear programming }

\author[S. Borodachov]{S. V. Borodachov}
\address{Department of Mathematics, Towson University, 7800 York Rd, Towson, MD, 21252, USA}
\email{sborodachov@towson.edu}

\author[P. Boyvalenkov]{P. G. Boyvalenkov}
\address{ Institute of Mathematics and Informatics, Bulgarian Academy of Sciences,
8 G Bonchev Str., 1113  Sofia, Bulgaria}
\email{peter@math.bas.bg}

\author[P. Dragnev]{P. D. Dragnev}
\address{ Department of Mathematical Sciences, Purdue University \\
Fort Wayne, IN 46805, USA }
\email{dragnevp@pfw.edu}

\author[D. Hardin]{D. P. Hardin}
\address{ Center for Constructive Approximation, Department of Mathematics \\
Vanderbilt University, Nashville, TN 37240, USA }
\email{doug.hardin@vanderbilt.edu}

\author[E. Saff]{E. B. Saff}
\address{ Center for Constructive Approximation, Department of Mathematics \\
Vanderbilt University, Nashville, TN 37240, USA }
\email{edward.b.saff@vanderbilt.edu}

\author[M. Stoyanova]{M. M. Stoyanova}
\address{ Faculty of Mathematics and Informatics, Sofia University ``St. Kliment Ohridski"\\
5 James Bourchier Blvd., 1164 Sofia, Bulgaria}
\email{stoyanova@fmi.uni-sofia.bg}

	 \begin{abstract}  
Universal bounds for the potential energy of weighted spherical codes are obtained by linear programming. The universality is in the sense of Cohn-Kumar -- every attaining code is optimal with respect to a large class of potential functions (absolutely monotone), in the sense of Levenshtein -- there is a bound for every weighted code, and in the sense of parameters (nodes and weights) -- they are independent of the potential function. We derive a necessary 
condition for optimality (in the linear programming framework) of our lower bounds which is also shown to be sufficient when the potential is 
strictly absolutely monotone. Bounds are also obtained for the weighted energy of weighted spherical designs. We explore our bounds for 
several previously studied weighted spherical codes. 
\end{abstract}

 		\maketitle


\section{Introduction}

Let $\mathbb{S}^{n-1}$ be the unit sphere in $\mathbb{R}^n$. A pair $(C,W)$ consisting of an $N$-tuple 
$C=(x_1,x_2,\ldots,x_N)$ of distinct points on $ \mathbb{S}^{n-1}$ 
and corresponding 
weights $W=(w_1,w_2,\ldots,w_N)$, where $w_i>0$ corresponds to $x_i$ and $w_1+w_2+\cdots+w_N=1$, 
is called {\em a weighted spherical code}. Let $s(C):=\max \{ x_i \cdot x_j : i \neq j \}$ be the maximal inner product of 
pairs of distinct points of $C$ (or, for short, the maximal inner product of $C$). We shall denote by $|X|$ the size of a tuple $X$.

For a given (extended real-valued) continuous function $h:[-1,1] \to \mathbb{R} \cup \{+\infty\}$, finite on $[-1,1)$, and 
a given weighted spherical code, we consider the {\em weighted $h$-energy of $(C,W)$}
\[ E_h(C,W):=\sum_{i \neq j}w_iw_jh(x_i \cdot x_j),\]
which arises in the electrostatic problem of distributing $N=|W|$ positive charges (not necessa\-ri\-ly equal) on the unit sphere. 
Let 
\begin{equation} \label{min-energy}
\mathcal{E}^h(W):=\inf \{ E_h(C,W)\, :\, C\in \left( \mathbb{S}^{n-1}\right)^{|W|} \}
\end{equation}
be the {\em minimum weighted $h$-energy} among all weighted codes $(C,W)$ with fixed weight $W$.

Similarly, for fixed $s \in [-1,1)$ we consider 
\begin{equation} \label{min-w}
\mathcal{U}^h(s,W):=\sup \{ E_h(C,W):C\in \left( \mathbb{S}^{n-1}\right)^{|W|} , \, s(C) = s\},  
\end{equation}
the {\em maximum\footnote{By convention, we define the $\sup$ of the empty set to be $-\infty$. Thus, $\mathcal{U}^h(s,W)=-\infty$ when there exist no $C \subset \mathbb{S}^{n-1}$ with $|C|=N=|W|$ and $s(C)=s$.} weighted $h$-energy} among all weighted codes $(C,W)$ with fixed maximal inner product $s \in [-1,1)$ and fixed weights $W$.

Although the general linear programming theorems that will be presented in Sections 2 and 4 (Theorem \ref{t_lower} and 
Theorem \ref{t_upper}, respectively), 
hold for general potentials $h$, we will be especially interested when they are {\em absolutely monotone}
({\em strictly absolutely monotone}); that is, $h^{(i)}(t)\geq 0$, $i=0,1,\dots$ ($h^{(i)}(t)> 0$, $i=0,1,\dots$) for
all $t \in [-1,1)$ (all $t \in (-1,1)$). Commonly occuring absolutely monotone potentials include
\[ h(t)=[2(1-t)]^{1-n/2}, \mbox{ Newton potential},\]
\[ h(t)=[2(1-t)]^{-\alpha/2}, \ \alpha>0, \mbox{ Riesz potential}, \]
\[ h(t)=e^{-\alpha (1-t)}, \mbox{ Gaussian potential}, \]
\[ h(t)=-\log [2(1-t)], \mbox{ Logarithmic potential}. \]
As observed by Cohn and Kumar \cite[p. 101]{CK}, if an absolutely monotone function fails to be strictly absolutely monotone, 
then it is a polynomial. 

As is often the case in the study of spherical codes, the Gegenbauer polynomials $P_i^{(n)}$ of respective degrees $i=0, 1, \dots$, 
play a useful role. They are orthogonal with respect to the measure 
\[d\mu_n(t):=\gamma_n (1-t^2)^{\frac{n-3}{2}}\, dt, \quad t\in [-1,1], \]
where
$ \gamma_n := \Gamma(\frac{n}{2})/\sqrt{\pi}\Gamma(\frac{n-1}{2})$
is a normalizing constant that makes $\mu_n$ a probability measure. We normalize these polynomials by $P_i^{(n)}(1)=1$. 
Note that $P_i^{(n)}(t)$ is exactly the Jacobi polynomial $P_i^{(\alpha,\beta)}(t)$ with parameters
$\alpha=\beta=(n-3)/2$ and the corresponding normalization.

Given a weighted code $(C,W)$ we consider its {\em weighted moments}
\begin{equation}\label{WeightedMoments} 
M_\ell(C,W):=\sum_{i,j=1}^{|W|} w_iw_jP_\ell^{(n)}(x_i \cdot x_j), \ \ \ell \geq 1.
\end{equation}
It follows from the positive definiteness of the Gegenbauer polynomials (see e.g. \cite{Sch42}, \cite[Chapter 5]{BHS}) that $M_\ell(C,W) \geq 0$ for every positive integer $\ell$. The case
of equality for some $\ell$ is especially interesting. 

\begin{definition} \label{w-des} 
A weighted spherical code $(C,W)$ is called a {\em weighted spherical design of strength $\tau$} (or {\em a weighted spherical $\tau$-design})
if its first $\tau$ weighted moments are zero; i.e.,
\[ M_\ell(C,W)=0 \ \ \mbox{ for } 1 \leq \ell \leq \tau. \]
\end{definition}

In the equi-weighted case $w_1=\cdots=w_N=1/N$ one obtains the classical spherical designs introduced in the seminal paper of Delsarte, Goethals, and Seidel \cite{DGS} from 1977. The weighted case can be traced back to the 1960's and 70's when cubature formulas (or numerical integration schemes with
some degree of precision) for approximate calculation of multiple integrals on $\mathbb{S}^{n-1}$ were investigated \cite{S62,S71,S74,Sal78,GS81}. 
The related weighted designs in $\mathbb{R}^n$ and Euclidean designs on several concentric spheres were first considered in \cite{NS88} (see also the
comprehensive survey \cite{Ban09}).

Utilizing linear programming, we shall obtain bounds for the quantities 
$\mathcal{E}^h(W)$ given by \eqref{min-energy} and $\mathcal{U}^h(s,W)$ given by \eqref{min-w}. Our bounds 
are valid for all {\em absolutely monotone potentials} $h$ and are universal in the sense of Levenshtein (there is a bound for {\em every} 
weighted code) and in the sense of defining parameters (nodes and weights) 
that are {\em independent} of the potential function. Also, assuming the existence of a code that attains our bound for some absolutely monotone 
$h$, that code will attain our bound for {\em all absolutely monotone $h$}. That is, our bounds are universal in the sense of Cohn-Kumar. 
We shall also obtain bounds in the special cases when the codes are weighted spherical designs. 

The universal lower bounds (ULB, Theorem \ref{ulb}) on $\mathcal{E}^h(W)$ and universal upper
bounds (UUB, Theorem \ref{uub}) on $\mathcal{U}^h(s,W)$ are derived as certain solutions of linear 
programs that arise naturally as generalizations of the equi-weighted frameworks 
from \cite{BDHSS-ca} for the ULB and from \cite{BDHSS-dcc} for the UUB (see also \cite{BDHSS-amp} for such bounds in 
polynomial metric spaces). We
present examples of ULB and UUB for some special weighted codes that have previously attracted attention  
 for their high degree of precision when used in cubature formulas; see Sobolev \cite{S62},
Goethals and Seidel \cite{GS81}, Waldron \cite{W18}, and Godsil \cite[Theorem 3.2]{G88-89}. 

The paper is organized as follows. In Section 2 we develop the general linear programming technique for obtaining lower bounds on 
the energy of weighted codes and formulate a linear program to obtain bounds that are in a sense optimal. Then we solve that program in a
certain class of polynomials to derive our universal lower bound for the quantity $\mathcal{E}^h(W)$. In the remainder of the section we consider weighted spherical designs and present examples of our lower 
bounds in two cases where good weighted spherical designs can be easily constructed -- a weighted pentakis dodecahedron
(weighted union of an icosahedron with a dodecahedron), which is a weighted spherical 9-design, and a weighted union of a 
cube with a crosspolytope in $n$ dimensions, which is a weighted spherical 5-design. 
In Section 3 we prove a necessary condition for optimality of our lower bounds, which is also sufficient for
strictly absolutely monotone potentials with positive derivatives at $-1$. Section 4 is devoted to the counterpart technique for obtaining upper bounds for the 
quantity $\mathcal{U}^h(s,W)$. We develop the corresponding linear programming framework and propose a construction 
of a solution which might be optimal. The same two examples are considered from the upper bounds case point of view. 
In Section 5 we derive ULB and UUB for weighted spherical designs. The design 
properites allow us to establish both bounds for broader classes of potentials and to improve the UUB. 

\section{Universal lower bounds for energy of weighted codes}

\subsection{A general linear programming lower bound for weighted codes}

Given a potential function $h$, we consider the set of polynomials 
\[ L_h^{(n)}:= \left\{f(t)=\sum_{i=0}^{{\rm deg}(f)} f_i P_i^{(n)}(t) : f(t) \leq h(t), t \in [-1,1), f_i \geq 0, i\geq 1\right\}, \]
where $\left\{P_i^{(n)}\right\}_{i=0}^\infty$ are the Gegenbauer polynomials as defined in the Introduction.
The set $L_h^{(n)}$ will be the feasible domain for our
linear programming lower bounds for $\mathcal{E}^h(W)$. Without loss of generality we may assume that $f(1)>0$. 

The following theorem is a useful folklore result of Delsarte-Yudin type \cite{Del1,Del2,DGS,Y,KY97}. 

Hereafter we denote the common size of $C$ and $W$ by $N$.

\begin{theorem} \label{t_lower}
If $f(t)=\sum_{\ell =0}^{\deg(f)} f_\ell P_\ell^{(n)}(t) \in L_h^{(n)}$, then for every $(C,W)$ code on $\mathbb{S}^{n-1}$ 
\begin{equation} \label{DY1}
E_h(C,W) \geq E_f(C,W) \geq f_0-f(1)\sum_{i=1}^{N} w_i^2.
\end{equation}
Consequently,
\begin{equation} \label{DY_Bound}\mathcal{E}^h(W) \geq \sup_{f \in L_h^{(n)}} 
\left(f_0-f(1)\sum_{i=1}^{N} w_i^2\right)=:{\rm ULB}(W,h). \end{equation}
If equality is attained throughout \eqref{DY1} for some $f$ and $(C,W)$ with $C=(x_1,x_2,\ldots,x_N)$, 
then $f(x_i \cdot x_j)=h(x_i \cdot x_j)$ for every $i \neq j$ and $f_\ell M_\ell(C,W)=0$ 
for every $\ell \in \{1,2,\ldots,\deg(f)\}$. 
\end{theorem}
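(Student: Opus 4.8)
The plan is to establish the chain of inequalities in \eqref{DY1} and then analyze the equality case. First I would expand $E_f(C,W)$ using the Gegenbauer expansion of $f$. Writing
\[
E_f(C,W)=\sum_{i\neq j}w_iw_j f(x_i\cdot x_j)=\sum_{i,j=1}^N w_iw_j f(x_i\cdot x_j)-f(1)\sum_{i=1}^N w_i^2,
\]
where I have added and subtracted the diagonal terms $i=j$, using $x_i\cdot x_i=1$. Substituting $f=\sum_\ell f_\ell P_\ell^{(n)}$ and interchanging the order of summation, the full double sum becomes $\sum_\ell f_\ell M_\ell(C,W)$ with $M_0(C,W)=(\sum_i w_i)^2=1$ since the weights sum to $1$. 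Thus
\[
E_f(C,W)=f_0+\sum_{\ell\geq 1}f_\ell M_\ell(C,W)-f(1)\sum_{i=1}^N w_i^2.
\]
Here is where the two defining properties of $L_h^{(n)}$ enter: the coefficients satisfy $f_\ell\geq 0$ for $\ell\geq 1$, and the positive-definiteness of the Gegenbauer polynomials gives $M_\ell(C,W)\geq 0$ for $\ell\geq 1$. Hence each term $f_\ell M_\ell(C,W)$ is nonnegative, which yields the second inequality $E_f(C,W)\geq f_0-f(1)\sum_i w_i^2$.

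The first inequality $E_h(C,W)\geq E_f(C,W)$ follows directly from the pointwise bound $f(t)\leq h(t)$ on $[-1,1)$: since $i\neq j$ forces $x_i\cdot x_j\in[-1,1)$ (the points are distinct and on the sphere, so no inner product equals $1$ off the diagonal), and each weight product $w_iw_j$ is positive, summing $w_iw_j f(x_i\cdot x_j)\leq w_iw_j h(x_i\cdot x_j)$ over $i\neq j$ gives the claim. This completes \eqref{DY1}, and taking the supremum over $f\in L_h^{(n)}$ immediately yields \eqref{DY_Bound}.

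For the equality analysis, suppose equality holds throughout \eqref{DY1}. Equality in the first inequality $E_h(C,W)=E_f(C,W)$ means $\sum_{i\neq j}w_iw_j\bigl(h(x_i\cdot x_j)-f(x_i\cdot x_j)\bigr)=0$. Since every summand is nonnegative (by $f\leq h$) and every weight product $w_iw_j$ is strictly positive, each summand must vanish, forcing $f(x_i\cdot x_j)=h(x_i\cdot x_j)$ for all $i\neq j$. Equality in the second inequality $E_f(C,W)=f_0-f(1)\sum_i w_i^2$ means $\sum_{\ell\geq 1}f_\ell M_\ell(C,W)=0$; again each term is a product of two nonnegative factors, so every term vanishes, giving $f_\ell M_\ell(C,W)=0$ for each $\ell\in\{1,\dots,\deg(f)\}$. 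I do not expect any genuine obstacle here: the argument is a clean Delsarte--Yudin computation, and the only point demanding a little care is the bookkeeping of the diagonal terms together with the normalization $\sum_i w_i=1$, which is what makes $M_0(C,W)=1$ and isolates the constant term $f_0$ cleanly.
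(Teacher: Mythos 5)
Your proposal is correct and follows essentially the same argument as the paper: the first inequality from $f\leq h$ on $[-1,1)$, the second from the Gegenbauer expansion with $M_0(C,W)=1$, $f_\ell\geq 0$, and positive definiteness giving $M_\ell(C,W)\geq 0$, with the equality conditions read off from the vanishing of the nonnegative terms. Your treatment of the equality case is slightly more explicit than the paper's (which simply states the conditions follow from the computation), but the route is the same.
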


\begin{proof} The first inequality in \eqref{DY1} follows obviously from $f \leq h$ in $[-1,1)$; i.e.,
\[ E_h(C,W) = \sum_{i \neq j}w_iw_j h(x_i \cdot x_j) \geq \sum_{i \neq j}w_iw_j f(x_i \cdot x_j)=E_f(C,W). \]
For the second inequality we estimate $E_f(C,W)$ from below as follows:
\begin{eqnarray*}
E_f(C,W) &=& \sum_{i,j}w_iw_j f(x_i \cdot x_j)-f(1)\sum_{i=1}^{N} w_i^2 \\
              &=& \sum_{\ell=0}^{\deg(f)} f_\ell \sum_{i,j} w_iw_j P_\ell^{(n)}(x_i \cdot x_j) 
                      -f(1)\sum_{i=1}^{N} w_i^2 \\ 
              &=& f_0+\sum_{\ell=1}^{\deg(f)} f_\ell M_\ell(C,W)-f(1)\sum_{i=1}^{N} w_i^2  \\
              &\geq & f_0-f(1)\sum_{i=1}^{N} w_i^2.
\end{eqnarray*}
Above we used that the coefficient of $f_0$ is $\sum_{i,j=1}^{N} w_iw_j=
\left(\sum_{i=1}^{N} w_i \right)^2=1$, the inequalities $f_\ell \geq 0$ for $\ell \geq 1$, 
and $\sum_{i,j} w_iw_j P_\ell^{(n)}(x_i \cdot x_j)=M_\ell(C,W) \geq 0$ because of the positive definiteness of the Gegenbauer polynomials. 
The conditions for equality follow immediately from the above. 
\end{proof}

\begin{remark}
We note that equality will hold for the last two quantities of the multi-display formula above if and only if  $f_\ell M_\ell (C,W)=0$ for all $\ell=1,\dots,{\rm deg} (f)$, which is true when the weighted code $(C,W)$ is a spherical weighted design of high enough strength. We shall use this observation in Section \ref{ULB_Design} to broaden the class of potentials for which our universal bounds hold when applied to weighted spherical designs.
\end{remark}

Of particular importance is the case when the supremum in \eqref{DY_Bound} is taken over the class of polynomials $L_h^{(n)}\cap \mathcal{P}_m$, where $ \mathcal{P}_m$ denotes the class of polynomials of degree at most $m$. This yields the linear program
\begin{equation}\label{ULB_LP} 
\left\{ 
 \begin{array}{ll}
{\rm maximize} & \displaystyle f_0-f(1)\sum_{i=1}^{N} w_i^2 \\[8pt]
{\rm subject\  to} & f\in L_h^{(n)}\cap \mathcal{P}_m.
 \end{array}  \right.
\end{equation}
For particular parameters $h$, $W$, and $m$ we shall obtain explicit solutions of this linear program. We shall denote the 
maximized objective function by ${\rm ULB}_m (W,h)$.
Note that 
\begin{equation} \label{ineq-w2}
S_W:=\sum_{i=1}^{N} w_i^2 \geq \frac{1}{{N}} 
\end{equation}
with equality if and only if $w_1=w_2=\cdots=w_N=1/N$ (i.e., in the classical case of equi-weighted code). 
Then it follows that
\[ f_0-f(1)\sum_{i=1}^{N} w_i^2 \leq f_0-\frac{f(1)}{N}, \]
where the right-hand side coincides exactly with the quantity that appears in the linear programming bound for the equi-weighted codes. 
This means that the bounds from Theorem \ref{t_lower} will be always less than the bounds for the corresponding equi-weighted 
case. In any case, it is important to note that the quantity 
\[ N_W:=\frac{1}{S_W}=1/\sum_{i=1}^{N} w_i^2 \]
 has to play an important role since it is going to determine the parameters (nodes and weights)
of the universal lower bound in the same way as the cardinality $N$ does in \cite{BDHSS-ca}.
Clearly, as the weights $w_i$ get closer in value to one another, as measured by the {\em variance} 
  \begin{equation}\label{Svar}
{\rm var}\,{W}:= ( {1}/{N})S_W-{1}/{N^2},
\end{equation} 
the quantity $N_W$ approaches $|W|=N$ from below. 
The inequality \eqref{ineq-w2}, written as $N \geq N_W$, means that $N_W$ is always less than or equal to the tuple size 
$N$ with equality only for equal weights and serves to replace $N$ in the framework from \cite{BDHSS-ca}. 
This discussion further justifies the bounds in this paper as natural generalizations of the ULB from \cite{BDHSS-ca} and UUB from \cite{BDHSS-dcc}. 
Note that in the UUB setting of Section 4 one additional constant, denoted by $N_q$, will appear, where $N_q>N \geq N_W$. Note that while $N$ is a natural number, $N_W$ and $N_q$ are in general real.

\subsection{Delsarte-Goethals-Seidel bound for minimal spherical designs and Levenshtein bound for maximal spherical codes}

The parameters of our energy bounds are closely related to two classical universal linear programming bounds, the Delsarte-Goethals-Seidel lower bound for minimal cardinality $\mathcal{B}(n,m)$ of spherical designs of given dimension $n$ and strength $m$ and the Levenshtein upper bound for the maximal cardinality $\mathcal{A}(n,s)$ of spherical codes of given dimension $n$ and maximal inner product $s$. 

The Delsarte-Goethals-Seidel bound \cite{DGS} states that 
\begin{equation} \label{DGS-bound}
\mathcal{B}(n,m) \geq D(n,m) := {n+k-2+\varepsilon \choose n-1}+{n+k-2 \choose n-1}
\end{equation}
for $m=2k-1+\varepsilon$, where $\varepsilon \in \{0,1\}$ just indicates the parity of $m$.
The numbers $D(n,m)$, $m=1,2,3,\ldots$, from \eqref{DGS-bound} define a partition of the positive integers in $[2,+\infty)$ into  countably many consecutive intervals. 
Therefore, there exists a unique positive integer $m$ such that $N_W \in (D(n,m),D(n,m+1)]$ (we will always assume that $N_W>2$). In what follows we will always assume that $N_W$ and $m$ are related this way.   

Following Levenshtein's notations from \cite{Lev92}, for $a,b \in \{0,1\}$ we denote by $t_k^{a,b}$ the largest zero of the 
Jacobi polynomial $P_k^{(a+(n-3)/2,b+(n-3)/2)}(t)$, $k \geq 1$, with $t_0^{1,1}:=-1$ by definition. For $m\in \mathbb{N}$, consider the interval 
\begin{eqnarray*}
  I_m :=
\left\{
\begin{array}{ll}
    \left [ t_{k-1}^{1,1},t_k^{1,0} \right ], & \mbox{if } m=2k-1, \\[6pt]
    \left [ t_k^{1,0},t_k^{1,1} \right ],      & \mbox{if } m=2k, \\
  \end{array}\right.
\end{eqnarray*}
(or $I_m:=[t_{k-1+\varepsilon}^{1,1-\varepsilon},t_{k}^{1,\epsilon}]$ with the short $\varepsilon$-notation).
The interlacing properties 
\[ t_{k-1}^{1,1}<t_k^{1,0}<t_k^{1,1}, \ \ k \geq 1, \] 
(see \cite[Sections 3, 4]{Lev92} or \cite[Lemmas 5.29, 5.30]{Lev98}) 
imply that the collection of intervals $\{I_m\}_{m=1}^\infty$ is well defined and constitute a partition of the interval $[-1,1)$ into countably many subintervals with non-overlapping interiors. The Levenshtein bound (obtained in 1979 \cite{Lev79}; see also \cite{Lev92,Lev98}) states that 
\begin{equation} \label{Lev-bound}
\mathcal{A}(n,s) \leq L_m(n,s), \ \ s \in I_m,
\end{equation}
where 
\[ L_m(n,s):= {k+n-3+\varepsilon \choose n-2} \left[ \frac{2k+n-3+2\varepsilon}{n-1}
-\frac{(1+s)^{\varepsilon}\left(P_{k-1+\varepsilon}^{(n)}(s)-
P_{k+\varepsilon}^{(n)}(s)\right)}
{(1-s)\left(\varepsilon P_{k}^{(n)}(s)+P_{k+\varepsilon}^{(n)}(s)\right)} \right]. \]

There is a strong relation between the Delsarte-Goethals-Seidel bound and the Levenshtein bound. For every fixed $m \in \mathbb{N}$,
the function $L_m(n,s)$ is continuous, strictly increasing, and maps the interval $I_m$ bijectively onto the interval $[D(n,m),D(n,m+1)]$. This 
relates the two partitions from above and, moreover, implies the four equalities  (recall that $m=2k-1+\varepsilon$)
\begin{equation}\label{L-DGS1}
L_{m}(n,t_{k-1+\varepsilon}^{1,1-\varepsilon}) = D(n,m), \ \ L_{m}(n,t_{k}^{1,\varepsilon}) = D(n,m+1),\ \ \varepsilon \in \{0,1\} 
\end{equation}
at the ends of the intervals $I_{m}$. The Levenshtein function $\mathcal{L}(n,s):[-1,1) \to [2,+\infty)$ is defined to coincide with $L_m(n,s)$ on $I_m$, $m \geq 1$.
It follows from \eqref{L-DGS1} that it is continuous, strictly increasing, and smooth in the interiors of the intervals $I_m$.

The Levenshtein bound $L_m(n,s)$ is valid (and optimal in a sense; see \cite{Sid80}, \cite[Section 4]{Lev92}, \cite[Theorem 5.39]{Lev98}) 
in the whole interval $s \in I_m$. It is defined via corresponding Levenshtein polynomial $f_{m}^{(n,s)}(t)$ and one has 
$L_m(n,s)=f_m^{(n,s)}(1)/f_0$ (see, e.g., \cite[Section 4]{Lev92} for detailed description of this correspondence). Note that 
\[ f_m^{(n,s)}(t)= (t-\alpha_{k-1+\varepsilon}) (t-\alpha_0)^{2-\varepsilon} \prod_{i=1}^{k-2+\varepsilon}(t-\alpha_i)^2, \] 
where the roots of $f_m^{(n,s)}$ will play important roles in our considerations (first of all, $\alpha_{k-1+\varepsilon}=s$).

\subsection{Generation of parameters}
We are now in a position to generate the necessary parameters for our universal lower bounds. 

Given a weighted spherical code $(C,W)$ we find the parameter $N_W>2$ and determine the unique positive integer $m=2k-1+\varepsilon$ such that 
\begin{equation} \label{int-sum}
D(n,m) < N_W \leq D(n,m+1).
\end{equation}
Next, we find the unique $s \in I_m=[t_{k-1+\varepsilon}^{1,1-\varepsilon},t_{k}^{1,\epsilon}]$ such that 
\begin{equation} \label{L}
L_m(n,s)=N_W
\end{equation}
and construct the corresponding Levenshtein polynomial $f_m^{(n,s)}(t)$. 
Then the roots of $f_m^{(n,s)}$, 
\[ \alpha_0<\alpha_1 < \cdots < \alpha_{k-1+\varepsilon}=s \]
(note that $-1 \leq \alpha_0$ with equality if and only if $\varepsilon=1$; i.e. if $m=2k$) will serve as nodes for both, quadrature \eqref{QR} and our Hermite interpolation. 
Alternatively, one can obtain the nodes $(\alpha_i)_{i=0}^{k-1+\varepsilon}$ as the roots of \eqref{L} considered as an equation in the variable $s$.

We also need weights $(\rho_i)_{i=0}^{k-1+\varepsilon}$ ($\rho_i$ corresponds to $\alpha_i$) which are computed by substituting the 
Lagrange basis polynomials $\ell_i(t)=\prod_{j \neq i} (t-\alpha_j)$ for 
$i=0,1,\ldots,k-1+\varepsilon$ in the quadrature formula \eqref{QR} below. 
Explicit formulas for $(\rho_i)_{i=0}^{k-1+\varepsilon}$ in the case $\varepsilon=0$ (this is for odd $m=2k-1$) 
can be found in \cite[Appendix A4]{BDL99} (see also the expressions in the proof of Theorem 5.39 from \cite{Lev98}). 
We note also that
\[ \rho_i = \int_{-1}^{1} \ell_i^2(t) d \mu_n(t)>0, \ \ i=0,1,\ldots,k-1+\varepsilon, \]
(see \cite[Theorem 5.39]{Lev98}) and the identity $\sum_{i=0}^{k-1+\varepsilon} \rho_i =1-1/N_W$ both hold by setting $f(t)=\ell_i^2(t)$ or $f(t) \equiv 1$, respectively, in the quadrature formula \eqref{QR} below.

Summarizing, the derivation of the necessary parameters proceeds as follows. Given a weighted code $(C,W)$, we find $N_W$ and the unique $m=2k-1+\varepsilon$ such that $N_W \in (D(n,m),D(n,m+1)]$. Then we solve \eqref{L} as an equation in $s$ and derive the parameters  
$(\alpha_i)_{i=0}^{k-1+\varepsilon}$ and $(\rho_i)_{i=0}^{k-1+\varepsilon}$.

\subsection{Universal lower bound for weighted codes}

It is crucial for our approach that the Gauss-type quadrature formula (cf. \cite[Theorem 5.39]{Lev98}) 
\begin{equation} \label{QR}
f_0=\frac{f(1)}{L_m(n,s)} + \sum_{i=0}^{k-1+\varepsilon} \rho_i f(\alpha_i) = 
            \frac{f(1)}{N_W} + \sum_{i=0}^{k-1+\varepsilon} \rho_i f(\alpha_i) 
\end{equation}
holds true for every polynomial $f(t)=f_0+\sum_{i=1}^{\deg(f)} f_iP_i^{(n)}(t)$ 
of degree at most $2k-1+\varepsilon$ (recall that
$N_W$ is less than but possibly close to $N$). The formula \eqref{QR} is called a {\em $1/N_W$-quadrature rule} in the framework from \cite{BDHSS-ca}. Note that we do not require the existence of  a  spherical code  with maximum inner product $s$ for such a quadrature to exist.

As in the equi-weighted case we will need two facts from the theory of orthogonal polynomials. 
Namely, it is well known that the Gegenbauer expansions of the polynomials $P_i^{(n)}(t)P_j^{(n)}(t)$ and 
$(t+1)P_i^{(n+2)}(t)P_j^{(n+2)}(t)$ have nonnegative coefficients for every $i,j$; see \cite{Gas70}.  
In \cite{Lev98}, these properties are called {\em Krein conditions} and {\em strengthened Krein conditions}, respectively (cf. \cite[Sections 3 and 5]{Lev98}).

We are now in a position to solve the linear program \eqref{ULB_LP}.

\begin{theorem} \label{ulb} (ULB for weighted codes) 
Let $W$ be a weight vector such that $N_W$ satisfies \eqref{int-sum} and let $h$ be absolutely monotone. Then
\begin{equation} \label{ulb-formula}
\mathcal{E}^h(W) \geq \sum_{i=0}^{k-1+\varepsilon} \rho_i h(\alpha_i), 
\end{equation}
where the parameters $(\alpha_i,\rho_i)_{i=0}^{k-1+\varepsilon}$ are defined as above. This bound cannot be 
improved by any polynomial from $L_h^{(n)} \cap \mathcal{P}_m$; i.e., the maximized objective function ${\rm ULB}_m (W,h)$ is equal to the right-hand side of \eqref{ulb-formula}.
\end{theorem}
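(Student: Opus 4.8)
The plan is to exhibit a single polynomial $f\in L_h^{(n)}\cap\mathcal{P}_m$ whose value of the objective $f_0-f(1)S_W$ in \eqref{ULB_LP} equals $\sum_{i=0}^{k-1+\varepsilon}\rho_i h(\alpha_i)$, and then to show that this value cannot be exceeded by any competitor in $L_h^{(n)}\cap\mathcal{P}_m$; by Theorem \ref{t_lower} the first fact yields the bound \eqref{ulb-formula} and together they give the optimality assertion. The hypothesis \eqref{int-sum} guarantees that $L_m(n,s)=N_W$ has a solution $s$ in the valid Levenshtein interval, so the nodes $(\alpha_i)$ and weights $(\rho_i)$ are well defined, exactly as in the equi-weighted framework of \cite{BDHSS-ca} with $N$ replaced by $N_W$. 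For $f$ I take the Hermite interpolant of $h$ at these nodes. The correct multiplicities are forced by the feasibility requirement $f\le h$: the Hermite remainder carries the factor $\omega(t)=\prod_i(t-\alpha_i)^{\mathrm{mult}_i}$ of degree $m+1$, and since $\omega$ must be nonnegative on $[-1,1)$, every interior node must be double while the only admissible simple node is the left endpoint. Thus for $\varepsilon=0$ I impose $f(\alpha_i)=h(\alpha_i)$ and $f'(\alpha_i)=h'(\alpha_i)$ at all $k$ nodes, while for $\varepsilon=1$ I impose double contact at $\alpha_1,\dots,\alpha_k$ together with the simple contact $f(-1)=h(-1)$ at $\alpha_0=-1$; in both cases the number of conditions is $m+1$, so $f$ is the unique such interpolant, of degree at most $m$.

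Granting for the moment that $f\in L_h^{(n)}$, the evaluation is immediate from the quadrature rule. Because $\deg f\le m=2k-1+\varepsilon$, formula \eqref{QR} applies and gives $f_0-f(1)/N_W=\sum_{i=0}^{k-1+\varepsilon}\rho_i f(\alpha_i)$; since $f$ interpolates $h$ at the nodes this equals $\sum_i\rho_i h(\alpha_i)$, and as $S_W=1/N_W$ the bound \eqref{ulb-formula} now follows from \eqref{DY1}. Optimality is just as short: for an arbitrary $g\in L_h^{(n)}\cap\mathcal{P}_m$, exactness of \eqref{QR} gives $g_0-g(1)/N_W=\sum_i\rho_i g(\alpha_i)$, and since $g\le h$ on $[-1,1)$, the nodes lie in $[-1,1)$, and the Levenshtein weights satisfy $\rho_i>0$, the right-hand side is at most $\sum_i\rho_i h(\alpha_i)$. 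Hence no feasible polynomial of degree at most $m$ improves upon $f$, so ${\rm ULB}_m(W,h)$ equals the right-hand side of \eqref{ulb-formula}.

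The substance of the proof is therefore the verification that $f\in L_h^{(n)}$, i.e. that $f\le h$ on $[-1,1)$ and $f_\ell\ge0$ for $\ell\ge1$. For the inequality I would use the Hermite remainder $h(t)-f(t)=\frac{h^{(m+1)}(\xi)}{(m+1)!}\,\omega(t)$ (equivalently, a divided difference of $h$ over the node multiset enlarged by $t$), where $\omega(t)=\prod_{i=0}^{k-1}(t-\alpha_i)^2$ when $\varepsilon=0$ and $\omega(t)=(t+1)\prod_{i=1}^{k}(t-\alpha_i)^2$ when $\varepsilon=1$. Since $h$ is absolutely monotone we have $h^{(m+1)}\ge0$ on $[-1,1)$, and $\omega(t)\ge0$ there, so $f\le h$. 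The harder step, where I expect the main obstacle to lie, is the nonnegativity of the Gegenbauer coefficients $f_\ell$, $\ell\ge1$. Here I would borrow the argument from the equi-weighted case: the interpolant $f$ coincides with Levenshtein's extremal polynomial associated with the parameter $N_W$, whose positive-definiteness is established from the Krein and strengthened Krein conditions recorded before the statement, namely the nonnegativity of the Gegenbauer expansions of $P_i^{(n)}P_j^{(n)}$ and of $(t+1)P_i^{(n+2)}P_j^{(n+2)}$. Because these structural properties and the explicit construction of $f$ depend only on $n$, $m$, and the nodes $(\alpha_i)$ --- and the nodes are pinned down by $N_W$ through \eqref{L} precisely as $N$ pins them down in \cite{BDHSS-ca} --- the conclusion $f_\ell\ge0$ transfers verbatim, completing the proof that $f\in L_h^{(n)}\cap\mathcal{P}_m$ and hence the theorem.
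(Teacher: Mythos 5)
Your proposal follows essentially the same route as the paper: take the Hermite interpolant $f$ of $h$ at the nodes $(\alpha_i)$ doubled (with $\alpha_0=-1$ simple when $\varepsilon=1$), use the Hermite remainder formula to get $f\le h$ on $[-1,1)$, and use the quadrature \eqref{QR} both to evaluate the objective at $f$ and to show that no $g\in L_h^{(n)}\cap\mathcal{P}_m$ can exceed $\sum_i\rho_i h(\alpha_i)$; your observation that the node multiplicities are forced by the sign of the remainder factor is a nice addition.

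The one defect lies in the step you yourself single out as the crux. Your claim that ``the interpolant $f$ coincides with Levenshtein's extremal polynomial associated with the parameter $N_W$'' is false: $f$ depends on the potential $h$, whereas the Levenshtein polynomial $f_m^{(n,s)}$ does not. What is true, and what the paper (following \cite{BDHSS-ca}) actually proves, is the following: written in Newton form, $f$ is a linear combination of $1$ and the partial products $\prod_{j=1}^{r}(t-t_j)$, $r=1,\dots,2k-1$ (with an extra factor $(t+1)$ in front when $\varepsilon=1$), whose coefficients are divided differences of $h$ and hence nonnegative because $h$ is absolutely monotone. The partial products of submaximal degree are positive definite by Theorem 3.1 of \cite{CK} combined with the Krein (for $\varepsilon=0$) or strengthened Krein (for $\varepsilon=1$) conditions, and only the \emph{top-degree} partial product equals the Levenshtein polynomial, whose positive definiteness is classical. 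Since this whole argument depends only on $n$, $m$, and the nodes --- and the nodes here are determined by \eqref{L} with $N_W$ playing the role of $N$ --- your appeal to the equi-weighted case is legitimate once this description is corrected; with that repair your proof is complete and coincides with the paper's.
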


\begin{proof}
Let $f$ be the unique Hermite interpolant to $h$ at the nodes $(\alpha_i)_{i=0}^{k-1+\varepsilon}$, each counted twice except 
for the case $\alpha_0=-1$ (equivalent to $m=2k$) which is 
counted once. Then $\deg(f) \leq m$, so \eqref{QR}  along with the interpolation conditions $f(\alpha_i)=h(\alpha_i)$, $i=0,1,\ldots,k-1+\varepsilon$, 
yields the equalities
\[ f_0-f(1)\sum_{i=1}^N w_i^2 = \sum_{i=0}^{k-1+\varepsilon} \rho_i f(\alpha_i)
= \sum_{i=0}^{k-1+\varepsilon} \rho_i h(\alpha_i)={\rm ULB}_m (W,h). \]
The error formula for the Hermite interpolation implies that for all $t\in [-1,1)$
\[ h(t)-f(t)=\frac{h^{(2k+\varepsilon)}(\xi)}{(2k+\varepsilon)!}(t-\alpha_0)^{2-\varepsilon} 
\prod_{i=1}^{k-1+\varepsilon}(t-\alpha_i)^2 , \quad \xi\in(-1,1). \]
As $h^{(2k+\varepsilon)}\geq 0$ on $[-1,1]$, we conclude that $f(t) \leq h(t)$ for every $t \in [-1,1)$. 

It remains to prove that $f$ is positive definite to complete the verification that $f \in L_h^{(n)}$.  

We first consider the case $\varepsilon=0$ (i.e., $m=2k-1$). Order the multiset of nodes as 
\[ (\alpha_0,\alpha_0,\alpha_1,\alpha_1,\ldots,\alpha_{k-1},\alpha_{k-1})=
 (t_1,t_2,\ldots,t_{2k-1},t_{2k}) \]
(i.e., $t_{2i+1}=t_{2i+2}=\alpha_i$ for $i=0,1,\ldots,k-1$). Then
the Newton interpolation formula (see, for example, \cite{B05}) 
\[ f(t)=h(t_1)+\sum_{r=1}^{2k-1} h[t_1,\ldots,t_{r+1}] \prod_{j=1}^{r} (t-t_j)  \]
implies that the polynomial $f$ is a nonnegative linear combination of the 
constant 1 and the partial products
\begin{equation} \label{p-prod-odd}
\prod_{j=1}^r (t - t_j), \ r=1,2,\ldots,2k-1. 
\end{equation}
Since 
\[ B_1(t-\alpha_0)(t-\alpha_1)\cdots(t-\alpha_{k-1})=P_k^{((n-1)/2,(n-3)/2)}(t)-\beta_1 P_{k-1}^{((n-1)/2,(n-3)/2)(t)},\]
for some positive constants $B_1$ and $\beta_1$, it follows from \cite[Theorem 3.1]{CK} that all polynomials 
\[ (t-\alpha_0)(t-\alpha_1)\cdots(t-\alpha_{i}), \ i=0,1,\ldots,k-2, \]
expand in the system $\{P_i^{((n-1)/2,(n-3)/2)}(t)\}$ with nonnegative coefficients. 
Since every polynomial $P_i^{((n-1)/2,(n-3)/2)}(t)$ is positive definite (cf. \cite[Eq. (5.119)]{Lev98}; this follows directly from
the Christoffel-Darboux formula which relates explicitly this polynomial to the Gegenbauer polynomials $P_j^{(n)}$, $j=0,1,\ldots,i-1$), 
the Krein condition implies that all partial products
\eqref{p-prod-odd} with $r \leq 2k-2$ are positive definite. The only remaining 
partial product (with $r=2k-1$ in \eqref{p-prod-odd})
is exactly the Levenshtein polynomial $f_{2k-1}^{(n,s)}(t)$ which is positive definite as
well (see, for example, \cite[Theorem 5.42]{Lev98}).
Therefore $f$ is positive definite and \eqref{ulb-formula} follows in this case. 


For $\varepsilon=1$ (i.e., $m=2k$) we need the strengthened Krein condition.
Now the interpolation nodes are ordered as 
\[ (\alpha_0=-1,\alpha_1,\alpha_1,\ldots,\alpha_{k},\alpha_{k})=
 (-1,t_1,t_2,\ldots,t_{2k-1},t_{2k}) \]
and our polynomial $f$ is a nonnegative linear combination of 1 
and the partial products
\begin{equation}
\label{p-prod-even}
(t+1)\prod_{j=1}^r (t - t_j), \ r=1,2,\ldots,2k-1.
\end{equation}
Theorem 3.1 from \cite{CK} now implies that all polynomials 
\[ (t-\alpha_1)(t-\alpha_2)\cdots(t-\alpha_{i}), \ i=1,2,\ldots,k-1, \]
expand in the system $\{P_i^{(n+2)}(t)\}$ with nonnegative coefficients because
\[ B_2(t-\alpha_1)(t-\alpha_2)\cdots(t-\alpha_{k})=P_k^{((n-1)/2,(n-1)/2)}(t)-\beta_2 P_{k-1}^{((n-1)/2,(n-1)/2)(t)},\]
for some positive constants $B_2$ and $\beta_2$.
Then all partial products from \eqref{p-prod-even} with $r \leq 2k-2$ expand with
nonnegative coefficients in $(t+1)P_i^{(n+2)}(t)P_j^{(n+2)}(t)$ and the
strengthened Krein condition completes the argument.
Finally, in the case $r=2k-1$ we obtain exactly the Levenshtein polynomial 
$f_{2k}^{(n,s)}(t)$ 
which is positive definite and this is exactly what is needed to complete the proof that $f \in L_h^{(n)} \cap 
\mathcal{P}_m$. Thus, \eqref{ulb-formula} follows in this case as well.


If $g(t)=\sum_{i=0}^{\deg(g)} g_i P_i^{(n)}(t)$ is a polynomial from 
$L_h^{(n)} \cap \mathcal{P}_m$ and $f$ is defined as above, then 
\eqref{QR} can be applied with $g$ to see that the lower bound provided by $g$ satisfies
\[ g_0-g(1)\sum_{i=1}^{N} w_i^2 = \sum_{i=0}^{k-1+\varepsilon} \rho_i g(\alpha_i)
\leq \sum_{i=0}^{k-1+\varepsilon} \rho_i h(\alpha_i)=f_0-f(1)\sum_{i=1}^{N} w_i^2={\rm ULB}_m (W,h), \]
which completes the proof. 
\end{proof}

We next establish the monotonicity of ${\rm ULB}_m(W,h)$ in $N_W$. 

\begin{theorem}Let $V=(v_1,\dots,v_{\widetilde{N}})$ and $W=(w_1,\dots,w_N)$ be two weight vectors with positive components 
such that $\sum_{i=1}^{|V|}  v_i=\sum_{i=1}^{|W|} w_i=1$, and suppose that $N_V<N_W$ (equivalently, $S_V>S_W$). 
Let $\widetilde{m}$ and $m$ be the positive integers associated with $V$ and $W$, respectively, via \eqref{int-sum}. Then $m \geq \widetilde{m}$ and 
$${\rm ULB}_m (W,h)>{\rm ULB}_{\widetilde{m}} (V,h).$$
If $m=\widetilde{m}$, then the nodes $(\alpha_i)_{i=0}^{k-1+\varepsilon}$ for $N_W$ are strictly greater than the corresponding nodes 
for $N_V$. 
\end{theorem}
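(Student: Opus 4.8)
The plan is to establish the three assertions in turn, isolating a single positivity fact as the one genuinely new ingredient. First, for $m\ge\widetilde m$ I would use only that the numbers $D(n,1)<D(n,2)<\cdots$ strictly increase and partition the positive integers into the consecutive intervals $(D(n,\ell),D(n,\ell+1)]$, so that the index assigned by \eqref{int-sum} is nondecreasing in its argument. If we had $m<\widetilde m$, then $m+1\le\widetilde m$ and hence
\[ N_W \le D(n,m+1) \le D(n,\widetilde m) < N_V, \]
contradicting $N_V<N_W$; thus $m\ge\widetilde m$. For the strict inequality I would exploit a feasibility comparison. Let $\widetilde f\in L_h^{(n)}\cap\mathcal{P}_{\widetilde m}$ be the optimal polynomial for $V$ from Theorem \ref{ulb}, so that ${\rm ULB}_{\widetilde m}(V,h)=\widetilde f_0-\widetilde f(1)\,S_V$ with $S_V=\sum_i v_i^2=1/N_V$. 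Since $\widetilde m\le m$, the polynomial $\widetilde f$ is also feasible for the program \eqref{ULB_LP} defining ${\rm ULB}_m(W,h)$, so optimality over the larger feasible set gives ${\rm ULB}_m(W,h)\ge\widetilde f_0-\widetilde f(1)\,S_W$. Subtracting, all terms but the endpoint value cancel:
\[ {\rm ULB}_m(W,h)-{\rm ULB}_{\widetilde m}(V,h) \ge \widetilde f(1)\,(S_V-S_W). \]
As $S_V>S_W$ by hypothesis, the entire strict inequality reduces to proving $\widetilde f(1)>0$.

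This positivity is the crux, and I would deduce it from the two structural properties of $\widetilde f$. On one hand, writing $\widetilde f=\sum_i\widetilde f_i P_i^{(n)}$ with $\widetilde f_i\ge 0$ for $i\ge 1$ and $P_i^{(n)}(1)=1$ gives $\widetilde f(1)=\widetilde f_0+\sum_{i\ge1}\widetilde f_i\ge\widetilde f_0$. On the other hand, applying the $1/N_V$-quadrature \eqref{QR} to $\widetilde f$ (legitimate since $\deg\widetilde f\le\widetilde m$) and using the interpolation conditions $\widetilde f(\widetilde\alpha_i)=h(\widetilde\alpha_i)\ge 0$ together with $\widetilde\rho_i>0$ yields
\[ \widetilde f_0 = \frac{\widetilde f(1)}{N_V}+\sum_i\widetilde\rho_i\,h(\widetilde\alpha_i) \ge \frac{\widetilde f(1)}{N_V}, \]
that is, $\widetilde f(1)\le N_V\widetilde f_0$. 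Combining the two estimates gives $\widetilde f_0\le\widetilde f(1)\le N_V\widetilde f_0$; since $N_V>1$ (indeed $\sum_i v_i^2<(\sum_i v_i)^2=1$ for two or more positive weights), this chain forces $\widetilde f_0\ge 0$ and hence $\widetilde f(1)\ge 0$. Finally $\widetilde f(1)=0$ would force $\widetilde f_0=0$ and then $\widetilde f_i=0$ for all $i\ge 1$, so $\widetilde f\equiv 0$; but then $\widetilde f\le h$ with equality at the nodes makes $h$ vanish on a nondegenerate interval, forcing $h\equiv 0$ by real-analyticity of absolutely monotone functions, which is excluded for any nontrivial potential. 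Hence $\widetilde f(1)>0$, completing the strict inequality.

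For the node comparison when $m=\widetilde m$, both configurations share the same $k$ and $\varepsilon$, and the largest node equals the Levenshtein parameter $s$ solving $L_m(n,s)=N$. Because $L_m(n,\cdot)$ is strictly increasing on its interval of validity and $N_W>N_V$, the largest nodes satisfy $\alpha_{k-1+\varepsilon}(N_W)>\alpha_{k-1+\varepsilon}(N_V)$. The remaining nodes are the other roots of \eqref{L}, equivalently the interior nodes of the $1/N$-quadrature; I would invoke the monotone dependence of the zeros of the associated adjacent orthogonal polynomials on the parameter $s$ (a Markov-type monotonicity already used in the equi-weighted framework of \cite{BDHSS-ca}) to conclude that each such node is a strictly increasing function of $s$, hence of $N$. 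Thus every node for $N_W$ strictly exceeds the corresponding node for $N_V$.

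I expect the endpoint positivity $\widetilde f(1)>0$ to be the main obstacle for the bound inequality, since it is exactly what converts the soft feasibility comparison into a strictly positive gain; the quadrature-sandwich $\widetilde f_0\le\widetilde f(1)\le N_V\widetilde f_0$ is the decisive and least routine step. For the node statement the only real work is the interior-node monotonicity in $s$, which is the technical point that leans on the orthogonal-polynomial machinery of the Levenshtein/ULB framework rather than on anything new here.
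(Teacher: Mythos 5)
Your proposal is correct and follows essentially the same route as the paper's proof: the same interval argument for $m \geq \widetilde{m}$, the same feasibility comparison in the linear program \eqref{ULB_LP} using the optimal polynomial for $V$ evaluated against both $S_V$ and $S_W$, and the same appeal to monotonicity of the Levenshtein function $L_m(n,\cdot)$ and of the nodes in $s$ for the final claim. The only difference is a matter of detail on one sub-step: where the paper simply notes that $g$ has nonnegative Gegenbauer coefficients so $g(1)=g_0+\cdots+g_{\widetilde{m}}>0$, you justify the endpoint positivity $\widetilde{f}(1)>0$ via the quadrature sandwich $\widetilde{f}_0 \leq \widetilde{f}(1) \leq N_V \widetilde{f}_0$ and rule out the degenerate case by analyticity, correctly flagging that $h \equiv 0$ must be excluded — a more careful treatment of the same point rather than a different approach.
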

\begin{proof} The inequality $N_V<N_W$ implies via \eqref{int-sum} that $m \geq \widetilde{m}$.
If the equality holds, \eqref{L} and the monotonicity of the Levenshtein function $\mathcal{L}(n,s)$ imply the monotonicity of the nodes $\alpha_i$; i.e., 
they are increasing with $s=\alpha_{k-1+\varepsilon}$ 
which is increasing with $N_W$ (see \cite{BDL99}).

Let $f$ and $g$ be the (unique) polynomial solutions of \eqref{ULB_LP} 
associated with $W$ and $V$, respectively. Then, as 
$g\in L_h^{(n)}\cap\mathcal{P}_{\widetilde{m}} \subseteq L_h^{(n)}\cap\mathcal{P}_m$,
the optimality of $f$ over $L_h^{(n)}\cap \mathcal{P}_m$ yields 
$$ {\rm ULB}_{m}(W,h)=f_0-\frac{f(1)}{N_W} \geq g_0-\frac{g(1)}{N_W}
> g_0-\frac{g(1)}{N_V}= {\rm ULB}_{\widetilde{m}}  (V,h).$$
Note that $g$ has positive Gegenbauer coefficients and $g(1)=g_0+\cdots+g_{\widetilde{m}}>0$.
\end{proof}

\subsection{Low degrees bounds} \label{small-d-lower}

We proceed with derivation of more explicit forms of the first degrees ULB. 
For degree one ULB, we have (see \cite[Table 6.1]{Lev98} for the first five bounds $L_m(n,s)$) $L_1(n,s)=(s-1)/s=N_W$ (note that $s \in [-1,-1/n]$
is negative). This gives 
\[ \alpha_0=-\frac{1}{N_W-1}, \ \ \rho_0 = -\frac{1}{\alpha_0N_W} = \frac{N_W-1}{N_W}, \] 
where $2 \leq N_W (\leq N) \leq n+1$. 
Therefore,
\[ \mathcal{E}^h(W) \geq {\rm ULB}_1 (W,h)= \rho_0 h(\alpha_0) = \frac{N_W-1}{N_W} \cdot h\left(-\frac{1}{N_W-1}\right), \ 2 \leq N_W \leq n+1 . \]

The degree two ULB is computed via $L_2(n,s)= 2n(1-s)/(1-ns)=N_W$, where $s \in [-1/n,0]$ 
(corresponding to the restrictions $n+1 \leq N_W \leq 2n$), giving the parameters 
\[ \alpha_0=-1, \ \alpha_1 = -\frac{2n-N_W}{n(N_W-2)}, \] 
\[ \rho_0 = \frac{N_W-n-1}{(n+1)N_W-4n}, \ \rho_1 = \frac{n(N_W-2)^2}{N_W((n+1)N_W-4n)}. \] 
This leads to the bound
\begin{eqnarray*}
\mathcal{E}^h(W) &\geq& {\rm ULB}_2 (W,h)=\rho_0 h(\alpha_0) + \rho_1 h(\alpha_1) \\
                       &=& \frac{N_W(N_W-n-1)h(-1)+n(N_W-2)^2h\left(-\frac{2n-N_W}{n(N_W-2)}\right)}{N_W((n+1)N_W-4n)} ,
\end{eqnarray*}
where $n+1 \leq N_W \leq 2n$.

The degree three ULB is already too complicated to be explicitly stated here. However, for particular suitable potentials, like the Fejes T{\'o}th potential
(see the next paragraph), where the potential and the formulas for the weights $(\rho_i)_{i=0}^{k-1}$ via the nodes $(\alpha_i)_{i=0}^{k-1}$
(this is for $\varepsilon=0$, i.e. $m=2k-1$) from \cite[Appendix]{BDL99} are relatively simple and fit well each other, the calculations
of ULB$_3(W,h)$ are still doable.  

The potential $h(t)=-\sqrt{2(1-t)}$ fits in the above scheme because $2+h(t)$ is absolutely monotone. This potential corresponds to the Fejes T{\'o}th
problem \cite{FejesToth1956} and it has been studied by many authors (see, for example, \cite{BilykMatzke,BBS23} and references therein).
The degrees 1-3 ULB for weighted codes and this particular potential (and their asymptotic consequences) can be extracted from \cite{BBS23} 
simply by replacing $N$ by $N_W$ and dividing by $N_W^2$.

\subsection{Examples}

In contrast to difficulties for derivation of more explicit analytic expressions of ULB$_m(W,h)$ for $m \geq 3$, the numerical calculations of 
bounds for given $n$, $h$, and $W$ can be easily programmed. 
In this subsection we present several examples, where the ULB and the actual weighted energy are computed. 

In all computations here and below we used two independent programs, one in Maple and one in Mathematica. Both programs produced the
same numbers with 50-digits precision. For the sake of short presentation, we truncate or round up (depending on whether the bounds are lower
or upper, respectively) the real numbers giving the weighted energy ULB and UUB. The values of the remaining real parameters ($N_W$,
nodes, weights) are rounded to the fourth digit.

\begin{example} \label{32} (Union of icosahedron and dodecahedron.)
Let $C_{32} \subset \mathbb{S}^2$ consist of the 12 vertices of an icosahedron, each of weight $w_I=20/(21 \cdot 32)=5/168$, and the 20 vertices 
of a dodecahedron, each of weight $w_D=36/(35 \cdot 32)=9/280$. The vertices of the icosahedron are the centers of the spherical caps defined by the 
twelve pentagonal faces of the dodecahedron. In geometry, this is called a pentakis dodecahedron or a kisdodecahedron.
Note that $(C_{32},W)$ is a weighted spherical 9-design (see \cite[Section 5]{GS81}, \cite[Example 3.6]{HW21}). 

We proceed with computations of the actual weighted energy of $(C_{32},W)$ and 
the corresponding ULB$_9(W,h)$ for the potential function $h(t)=1/\sqrt{2(1-t)}$.

The weighted energy of $(C_{32},W)$ is computed from the information about its structure from Table \ref{tab:table1}. There are two types of 
points, I and D, respectively, according to whether they belong to the icosahedron or the dodecahedron, which define the two different distance
distributions (the last two rows of Table  \ref{tab:table1}). To shorten the notation we set
\[ a:=\frac{\sqrt{1-2/\sqrt{5}}}{\sqrt{3}}, \ \ b:=\frac{\sqrt{1+2/\sqrt{5}}}{\sqrt{3}}. \]

\begin{table}[h!]
  \centering
  \caption{Structure of $(C_{32}, W)$.}
  \label{tab:table1}
  \begin{tabular}{|c|c|c|c|c|c|c|} \hline
     & \multicolumn{6}{|c|}{Inner products} \\ \cline{2-7}
    & $-1$ & $\pm 1/\sqrt{5}$ & $\pm a$ & $\pm b$ & $\pm 1/3$ & 
$\pm \sqrt{5}/3$\\
    \hline
 Type     & \multicolumn{6}{|c|}{Number of points} \\ \hline
    I & $1$ & $5$ & $5$ & $5$ & $0$ &  $0$\\ \hline
    D & $1$ & $0$ & $3$ & $3$ & $6$ & $3$\\ \hline
  \end{tabular}
\end{table}

Then,
\begin{eqnarray*}
E_h(C_{32},W) &=& \sum_{i \neq j} w_iw_j h(x_i \cdot x_j) \\
&=& 12w_I^2\left(h(-1)+5h(-1/\sqrt{5})+5h(1/\sqrt{5})\right) \\
      &&   +\, 120w_Iw_D\left(h(a)+h(-a)+h(b)+h(-b)\right)\\
&& + \, 20w_D^2\left(h(-1)+6h(-1/3)+6h(1/3)+3h(-\sqrt{5}/3)+3h(\sqrt{5}/3)\right) \\
&\approx& 0.8050318.
\end{eqnarray*}

We have $N_W=1/\sum_{i=1}^{32} w_i^2=735/23 \approx 31.9565217$ which is very close to the 
cardinality 32 of $C_{32}$. Thus, we expect good bounds. We compute the ULB for $n=3$, $N_W$, and the Coulomb potential $h(t)=1/\sqrt{2(1-t)}$.
Since $N_W$ belongs to the interval $(D(3,9),D(3,10)]=(30,36]$ (note that $N=32$ is in the same interval), we 
solve the equation $L_9(3,s)=N_W$ to derive the parameters 
$(\alpha_i,\rho_i)_{i=0}^4$ with approximate values as shown in Table \ref{tab:table2}.

\begin{table}[h!]
  \centering
  \caption{Parameters $(\alpha_i,\rho_i)_{i=0}^4$ for $(n,N,N_W)=(3,32,735/23)$.}
  \label{tab:table2}
  \begin{tabular}{|c|c|c|c|c|c|} \hline
    $i$ & $0$ & $1$ & $2$ & $3$ & $4$ \\
    \hline
    $\alpha_i$ & $-0.9412$ & $-0.6741$ & $-0.2109$ & $0.3281$ & $0.7793$ \\ \hline
    $\rho_i$ & $0.0771$ & $0.1889$ & $0.2636$ & $0.2612$ & $0.1777$ \\ \hline
  \end{tabular}
\end{table}

Therefore,
\[ \mathcal{E}^h(W) \geq {\rm ULB}_9(W,h)=\sum_{i=0}^{4} \rho_i h(\alpha_i) \approx 0.804786,\] 
which is very close (within $0.03\%$) to the actual $h$-energy $\approx 0.8050318$ of $(C_{32},W)$. 

It is worth mentioning that in the equi-weighted case, the Coulomb energy of $(C_{32},(1/32)^{32})$ is $E_h(C_{32},((1/32)^{32})) \approx 0.8052$ and the universal lower bound from \cite{BDHSS-ca} for $(n,N)=(3,32)$ and the same $h$ is $\approx 0.8049$. \hfill $\Box$
\end{example}

\begin{example} \label{cube+biortho} (Union of cube and cross-polytope.)
We consider a weighted code $(C_{qp},W)$ comprised of the union of a cube and a cross-polytope on 
$\mathbb{S}^{n-1}$
defined by their duality; i.e., each pair of antipodal vertices of the cross-polytope defines
a symmetry axis of two opposite facets of the cube. 
Each point of the cross-polytope has weight $w_p:=1/(2n+n^2)$ and each point of the 
cube has weight $w_c:=n^2/2^n(2n+n^2)$. We see that the sum of weights 
of the union is 1. Furthermore, $(C_{qp},W)$ is a weighted 5-design on $S^{n-1}$ for $n \geq 3$, as we now show. 

Indeed, since $C_{qp}$ is antipodal and the weights of points in each antipodal pair are equal, all odd (in particular the first, third, and fifth) 
weighted moments \eqref {WeightedMoments} of $(C_{qp},W)$ equal zero. To show that the fourth weighted moment is zero, we denote by $Q$ 
the set of vertices of the cube and by $P$ the set of vertices of the cross-polytope in $(C_{qp},W)$ and observe that for every 
$x=(x_1,\ldots,x_n)\in \mathbb{S}^{n-1}$,
\begin{equation}\label {qm}
\begin{split}
U_4(x):&=w_c\sum\limits_{y\in Q}(x\cdot y)^4+w_p\sum\limits_{y\in P}(x\cdot y)^4\\
&=\frac {w_c}{n^2}\sum\limits_{\sigma_1,\ldots,\sigma_n\in \{-1,1\}}(\sigma_1x_1+\cdots+\sigma_nx_n)^4+2w_p\sum\limits_{i=1}^{n}x_i^4 \\
&=\frac {2^nw_c}{n^2}\left(\sum\limits_{i=1}^{n}x_i^4+3\sum\limits_{i=1}^{n}\sum\limits_{j=1\atop j\neq i}^{n}x_i^2x_j^2\right)+2w_p\sum\limits_{i=1}^{n}x_i^4\\
&=\frac {3}{2n+n^2}\left(\sum\limits_{i=1}^{n}x_i^4+\sum\limits_{i=1}^{n}\sum\limits_{j=1\atop j\neq i}^{n}x_i^2x_j^2\right)\\
&=\frac {3}{2n+n^2}\left(x_1^2+\cdots+x_n^2\right)^2=\frac {3}{2n+n^2}=\gamma_n\int_{-1}^{1}t^4(1-t^2)^{\frac {n-3}{2}}\ \! dt.
\end{split}
\end{equation}

Using a similar argument, we can also show that
$$
U_2(x):=w_c\sum\limits_{y\in Q}(x\cdot y)^2+w_p\sum\limits_{y\in P}(x\cdot y)^2=\frac {1}{n}= 
\gamma_n\int_{-1}^{1}t^2(1-t^2)^{\frac {n-3}{2}}\ \! dt,\ \ \ x\in \mathbb{S}^{n-1}.
$$
Consequently,
\begin {equation}\label {qn}
w_c\sum\limits_{y\in Q}P_2^{(n)}(x\cdot y)+w_p\sum\limits_{y\in P}P_2^{(n)}(x\cdot y)=0,\ \ \ x\in \mathbb{S}^{n-1}.
\end {equation}

Finally, expressing $P_4^{(n)}$ from the Gegenbauer expansion of $t^4$ and using \eqref {qm}, \eqref {qn}, and the fact that \eqref {qm} equals the constant term in the Gegenbauer expansion of $t^4$, we obtain that
$$
w_c\sum\limits_{y\in Q}P_4^{(n)}(x\cdot y)+w_p\sum\limits_{y\in P}P_4^{(n)}(x\cdot y)=0,\ \ \ x\in \mathbb{S}^{n-1},
$$
which together with \eqref {qn} implies that the second and the fourth weighted moments \eqref{WeightedMoments} of $(C_{qp},W)$ equal zero.
Thus, $(C_{qp},W)$ is a weighted 5-design. 
 
For $n=2$, $(C_{qp},W)$ is a regular 8-gon and the weights are equal; i.e. it is a tight spherical 7-design. Therefore, it is a sharp spherical
code and a universally optimal configuration \cite{CK}.

In three and four dimensions, the codes $(C_{qp},W)$ can be described as follows. On $\mathbb{S}^2$, each point of the cross-polytope will have weight $1/15$ 
and each point of the cube will have weight $3/40$, giving a weighted spherical $5$-design of $14$ points; 
on $\mathbb{S}^3$, each point of the cross-polytope will have weight $1/24$ and each point of the cube will 
also have weight $1/24$ (thus, we obtain a $24$-cell, an equi-weighted spherical 5-design; see \cite{CCEK}). Note that there exist no equi-wegthed
spherical $5$-designs with 13 points \cite{BBD99} and the existence of such designs with 14 points is undecided. 

For any $h$, the actual weighted $h$-energy of $(C_{qp},W)$ is 
\begin{eqnarray*}
E_h(C_{qp},W) &=& 2nw_p^2\left(h(-1)+(2n-2)h(0)\right) \\
&& +\, 2^{n+1}nw_pw_c\left(h\left(\frac{1}{\sqrt{n}}\right)+
           h\left(-\frac{1}{\sqrt{n}}\right)\right) \\
&& +\, 2^nw_c^2\sum_{k=0}^{n-1} {n \choose k}h\left(-1+\frac{2k}{n}\right).
\end{eqnarray*}

The ULB for the corresponding parameters $(n,|C_{qp}|=2n+2^n,N_W)$, where
\[ N_W=\frac{1}{\sum_{i=1}^{2n+2^n} w_i^2}=\frac{1}{2nw_p^2+2^nw_c^2}=\frac{n(n+2)^22^n}{n^3+2^{n+1}} \]
(note that $N_W \in (D(n,5),D(n,6)]$ for $3 \leq n \leq 6$ only), is computed as follows. We solve
\[ L_5(n,s)=N_W \iff \frac{\left((n+2)(n+3)s^2+4(n+2)s-n+1\right)(1-s)}{2s\left(3-(n+2)s^2\right)}=\frac{(n+2)^22^n}{n^3+2^{n+1}} \]
to obtain the nodes $(\alpha_i)_{i=0}^2$. Then the quadrature weights $(\rho_i)_{i=0}^2$ are computed by setting $f$ equal to the Lagrange basis
polynomials in \eqref{QR} or by the known formulas
\[ \rho_0=-\frac{(1-\alpha_1^2)(1-\alpha_2^2)}{\alpha_0N_W(\alpha_0^2-\alpha_1^2)(\alpha_0^2-\alpha_2^2)}, \ \ 
\rho_1=-\frac{(1-\alpha_0^2)(1-\alpha_2^2)}{\alpha_1N_W(\alpha_1^2-\alpha_0^2)(\alpha_1^2-\alpha_2^2)} \]
from \cite{BDL99} and the relation $\rho_0+\rho_1+\rho_2=1-1/N_W$.

The ULB in dimensions $2 \leq n \leq 7$, calculated for the absolutely monotone potential 
\[ h(t)=\frac{1}{(2(1-t))^{(n-2)/2}}, \] 
are shown in 
the sixth column of Table \ref{tab:table3}. It is ULB$_7(W,h)$ for $n=2$, ULB$_5(W,h)$ for $3 \leq n \leq 6$ and  ULB$_6(W,h)$ for $n=7$. Note that 
the bound ULB$_7(W,h)$ is attaned for $n=2$, where it coincides with the ULB for the equi-weighted case \cite{BDHSS-ca} (recall that 
the attaining $(C_{qp},W)$ is an equi-weighted regular 8-gon). 

\begin{table}[h!]
  \centering
  \caption{Approximate parameters and ULB for $(n,N,N_W)=(n,2n+2^n,N_W)$, $2 \leq n \leq 7$, $h(t)=(2(1-t))^{-(n-2)/2}$.}
  \label{tab:table3}
  \begin{tabular}{|c|c|c|c|c|c|c|} \hline
    $n$ & $N_W$ & $N$ & $(\alpha_i)$ & $(\rho_i)$ & $ULB$ & Energy of $(C_{qp},W)$  \\
\hline
          & & & $-1$ & $1/8$ &  &   \\ 
    $2$ & $8$ & $8$ & $-\sqrt{2}/2$ & $1/4$ & $0.875$ & $0.875$  \\ 
          & & & $0$ & $1/4$ &  &   \\ 
          & & & $\sqrt{2}/2$ & $1/4$ &  &   \\ 
    \hline
          & & & $-0.8580$ & $0.1832$ &  &   \\ 
    $3$ & $13.95$ & $14$ & $-0.2701$ & $0.3832$ & $0.7058$ & $0.7070$  \\ 
          & & & $0.5225$ & $0.3618$ &  &   \\ 
\hline
          & & & $-0.8173$ & $0.1384$ &  &   \\ 
    $4$ & $24$ & $24$ & $-0.2575$ & $0.4339$ & $0.5781$ & $0.5798$  \\ 
          & & & $0.4749$ & $0.3858$ &  &   \\ 
\hline
          & & & $-0.7428$ & $0.1424$ &  &   \\ 
    $5$ & $41.48$ & $42$ & $-0.1910$ & $0.4680$ & $0.4825$ & $0.4901$  \\ 
          & & & $0.4684$ & $0.3653$ &  &   \\ 
\hline
          & & & $-0.6753$ & $0.1540$ &  &   \\ 
    $6$ & $71.44$ & $76$ & $-0.1327$ & $0.4996$ & $0.4074$ & $0.4314$  \\ 
          & & & $0.4705$ & $0.3323$ &  &   \\ 
\hline
          & & & $-1$ & $0.0022$ &  &   \\ 
    $7$ & $121.16$ & $142$ & $-0.5936$ & $0.1785$ & $0.3462$ & $0.3993$  \\ 
          & & & $-0.0772$ & $0.5165$ &  &   \\ 
          & & & $0.4748$ & $0.2944$ &  &   \\ 
\hline
  \end{tabular}
\end{table}

We remark that the $h$-energy in the equi-weighted case and the corresponding ULB from \cite{BDHSS-ca} for the 14-point $C_{qp}$ 
in three dimensions are $\approx 0.70757$ and $\approx 0.70629$, respectively. \hfill $\Box$
\end{example}

Many other examples from the classical sources \cite{S62,S71,S74,Sal78,GS81} can be similarly explored. Also, one can 
derive upper bounds from the next section and bounds for weighted spherical designs in the last section.

\section{On the optimality of the ULB} 

We showed in Theorem  \ref{ulb} that the bound \eqref{ulb-formula} cannot be improved by using polynomials of degree $m$ or less. 
We shall extend this result by proving a necessary and sufficient condition for the optimality of \eqref{ulb-formula} among 
polynomials from the whole set $L_h^{(n)}$. 

With  parameters as in Theorem \ref{ulb} we define the functions
\begin{equation} \label{test-functions}
Q_j(n,s):= \frac{1}{N_W}+\sum_{i=0}^{k-1+\varepsilon}\rho_i P_j^{(n)}(\alpha_i), \quad j \geq 1,
\end{equation}
and call them {\em test functions}. The name will be justified by Theorem \ref{THM-TF}.

It is easy to see that $Q_j(n,s) \equiv 0$ for $j \in \{1,2,\ldots,m\}$ (because in this case the right-hand side in \eqref{test-functions} is 
equal, via the quadrature \eqref{QR}, to the coefficient $f_0$ of the Gegenbauer polynomial $P_j^{(n)}$, which clearly equals $0$). 
The next theorem, which is a weighted analog of Theorems 2.6 and 4.1 from \cite{BDHSS-ca} (see also Theorem 3.1 in \cite{BDB96} and
Theorem 5.47 in \cite{Lev98}), shows that the values of the test functions 
$Q_j(n,s)$ for $j \geq m+1$ are as meaningful as their signs are. 

\begin{theorem}\label{THM-TF} Let $h$ be absolutely monotone. For given $n$, $N_W$, and parameters $m$ and 
$(\alpha_i,\rho_i)_{i=0}^{k-1+\varepsilon}$ as in Theorem  \ref{ulb}, the following is true. 

{\rm (a)} If $Q_j(n,s) \geq 0$ for every positive integer $j$, then the bound ULB$_{m}(W,h)$ 
cannot be improved by any polynomial from $L_h^{(n)}$; i.e.,
\[ \mathrm{ULB} (W,h)=\mathrm{ULB}_m (W,h). \]

{\rm (b)} If $h$ is strictly absolutely monotone, $h^{(i)}(-1)>0$ for all $i \geq 0$, and $Q_j(n,s) < 0$ for some positive 
integer $j \geq m+1$, then there exists a polynomial from $L_h^{(n)}$ of degree $j$ that gives 
a bound on $\mathcal{E}^h(W)$ better than ULB$_{m}(W,h)$; i.e.,
\[ \mathrm{ULB} (W,h) > \mathrm{ULB}_m (W,h). \]
\end{theorem}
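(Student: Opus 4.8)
The plan is to package the two parts as two sides of a linear-programming duality argument anchored on the quadrature rule \eqref{QR} and the equality conditions recorded in Theorem \ref{t_lower}. The common tool is the observation that, for \emph{any} polynomial $g(t)=\sum_{i=0}^{\deg(g)} g_i P_i^{(n)}(t)$, the quadrature \eqref{QR} is exact only up to degree $m$, so for higher degree the defect is captured precisely by the test functions: applying \eqref{test-functions} term by term to the Gegenbauer expansion of $g$ gives
\[
 g_0 - \frac{g(1)}{N_W} - \sum_{i=0}^{k-1+\varepsilon}\rho_i\, g(\alpha_i)
 = -\sum_{j\ge m+1} g_j\, Q_j(n,s),
\]
since $Q_j\equiv 0$ for $1\le j\le m$ and $Q_0$ contributes the $1/N_W$ term. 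This identity is the hinge of both parts, so I would establish it first as a short lemma.

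For part (a), I would take an arbitrary feasible $g\in L_h^{(n)}$ and show its objective value does not exceed ULB$_m(W,h)$. Because $g\le h$ on $[-1,1)$ and each $\alpha_i\in[-1,1)$ with $\rho_i>0$, the quadrature-sum satisfies $\sum_i \rho_i g(\alpha_i)\le \sum_i \rho_i h(\alpha_i)=\mathrm{ULB}_m(W,h)$. Combining this with the displayed identity and the feasibility constraints $g_j\ge 0$ for $j\ge 1$, the hypothesis $Q_j(n,s)\ge 0$ for all $j$ forces the defect term $-\sum_{j\ge m+1} g_j Q_j$ to be $\le 0$; hence $g_0-g(1)/N_W\le \sum_i\rho_i g(\alpha_i)\le \mathrm{ULB}_m(W,h)$. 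Taking the supremum over $g\in L_h^{(n)}$ gives $\mathrm{ULB}(W,h)\le\mathrm{ULB}_m(W,h)$, and the reverse inequality is immediate from $L_h^{(n)}\cap\mathcal P_m\subseteq L_h^{(n)}$.

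For part (b), the task is constructive: given $j\ge m+1$ with $Q_j(n,s)<0$, I would perturb the optimal interpolant $f$ from Theorem \ref{ulb} by adding a small positive multiple of $P_j^{(n)}$, i.e. set $g_\delta:=f+\delta\,P_j^{(n)}$ for small $\delta>0$. The $j$-th Gegenbauer coefficient increases by $\delta>0$ while the coefficients through degree $m$ stay positive (here I use strict absolute monotonicity and $h^{(i)}(-1)>0$ to ensure the interpolant $f$ has \emph{strictly} positive Gegenbauer coefficients, so the feasibility sign conditions survive a small perturbation), and the displayed identity shows the objective changes by exactly $-\delta\,Q_j(n,s)>0$, a strict improvement. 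The main obstacle, and the step I would spend the most care on, is re-establishing feasibility $g_\delta\le h$ on $[-1,1)$ after the perturbation: adding $\delta P_j^{(n)}$ can push $g_\delta$ above $h$ near the interpolation nodes where $h-f$ vanishes to second order. The remedy is to exploit the strict positivity $h^{(2k+\varepsilon)}>0$ on $(-1,1)$ and $h^{(i)}(-1)>0$, which give $h-f>0$ strictly on $[-1,1)$ away from the double nodes and a positive definite quadratic lower bound on $h-f$ at each node; choosing $\delta$ small enough relative to these margins keeps $g_\delta\le h$. I would therefore verify feasibility by a local analysis at each $\alpha_i$ (comparing the double-zero behaviour of $h-f$ against the bounded perturbation $\delta P_j^{(n)}$) together with a compactness argument on the complement, concluding that $g_\delta\in L_h^{(n)}$ for all sufficiently small $\delta$ and hence $\mathrm{ULB}(W,h)\ge g_{\delta,0}-g_\delta(1)/N_W>\mathrm{ULB}_m(W,h)$.
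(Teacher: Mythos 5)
Your part (a) is correct and is essentially the paper's own argument: the identity you isolate as a ``hinge'' lemma is exactly the chain of equalities in the paper's proof of (a), and the conclusion follows the same way.

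Part (b), however, contains a fatal computational error. For $g_\delta := f + \delta P_j^{(n)}$, with $f$ the degree-$m$ Hermite interpolant of $h$, the objective does \emph{not} change by $-\delta\, Q_j(n,s)$: since $(g_\delta)_0 = f_0$ and $g_\delta(1) = f(1) + \delta$, the bound produced by $g_\delta$ is
\[
(g_\delta)_0 - \frac{g_\delta(1)}{N_W} \;=\; f_0 - \frac{f(1)}{N_W} - \frac{\delta}{N_W} \;=\; \mathrm{ULB}_m(W,h) - \frac{\delta}{N_W},
\]
which is strictly \emph{worse}, whatever the sign of $Q_j(n,s)$. Your own identity confirms this when applied consistently: the quadrature sum also moves under the perturbation, namely $\sum_i \rho_i\, g_\delta(\alpha_i) = \mathrm{ULB}_m(W,h) + \delta\bigl(Q_j(n,s) - \tfrac{1}{N_W}\bigr)$, so the net change in the objective is $\delta\bigl(Q_j - \tfrac{1}{N_W}\bigr) - \delta Q_j = -\delta/N_W$. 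Feasibility is broken as well: at any node with $P_j^{(n)}(\alpha_i) > 0$ (which the hypothesis $Q_j(n,s)<0$ does not rule out, since it only forces the \emph{weighted sum} of the $P_j^{(n)}(\alpha_i)$ to be negative), one has $g_\delta(\alpha_i) = h(\alpha_i) + \delta P_j^{(n)}(\alpha_i) > h(\alpha_i)$ for \emph{every} $\delta>0$; a quadratic lower bound on $h-f$ that vanishes at $\alpha_i$ cannot absorb a perturbation that is nonzero at $\alpha_i$, so no smallness of $\delta$ helps.

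The paper's construction avoids both problems by perturbing the \emph{potential} rather than the polynomial: set $\widetilde h := h - \mu P_j^{(n)}$ with $\mu>0$ chosen small enough that $\widetilde h^{(i)} \geq 0$ on $[-1,1)$ for $i\leq j$ (this is exactly where strict absolute monotonicity and $h^{(i)}(-1)>0$ enter), let $g$ be the Hermite interpolant of $\widetilde h$ at the nodes as in Theorem \ref{ulb}, and take $f = g + \mu P_j^{(n)}$. Then $g \leq \widetilde h$ gives $f \leq h$, positive definiteness of $g$ gives that of $f$, and---crucially---the degree-$\leq m$ part now has node values $h(\alpha_i) - \mu P_j^{(n)}(\alpha_i)$ rather than $h(\alpha_i)$, so the quadrature yields $f_0 - f(1)/N_W = \mathrm{ULB}_m(W,h) - \mu Q_j(n,s) > \mathrm{ULB}_m(W,h)$. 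The gain comes precisely from letting the low-degree part adapt to the added Gegenbauer term; your construction freezes the low-degree part at the interpolant of $h$, and then the added term can only cost $\delta/N_W$ in the objective.
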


\begin{proof}
(a) Let us assume that $f \in L_{h}^{(n)}$ has degree $d \geq m+1$ (the case $d \leq m$ is covered by Theorem  \ref{ulb}). 
We decompose $f$ as
\begin{equation}
\label{n1}
f(t)= g(t)+\sum_{j=m+1}^d  f_j P_j^{(n)}(t), 
\end{equation}
where $g \in \mathcal{P}_m$. Note that $f_0=g_0$ and $f_j\ge 0$ for every relevant $j$. Using the 
quadrature \eqref{QR} for $h$ and the representation \eqref{n1}, we consecutively reorganize and finally estimate from above the bound
generated by $f$ as follows:
\begin{eqnarray*}
f_0- \frac{f(1)}{N_W} &=& g_0 - \frac{f(1)}{N_W} \\
    &=& \frac{g(1)}{N_W}+\sum_{i=0}^{k-1+\varepsilon} \rho_i g(\alpha_i) -\frac{1}{N_W} \cdot \left(g(1)+\sum_{j=m+1}^d  f_j \right) \\
    &=& \sum_{i=0}^{k-1+\varepsilon} \rho_i \left(f(\alpha_i)-\sum_{j=m+1}^d f_j P_j^{(n)}(\alpha_i)\right)-\frac{1}{N_W} \cdot\sum_{j=m+1}^d  f_j \\
    &=&\sum_{i=0}^{k-1+\varepsilon} \rho_i  f(\alpha_i)-\sum_{j=m+1}^d  f_j \left(\frac{1}{N_W}+ \sum_{i=0}^{k-1+\varepsilon}\rho_i P_j^{(n)}(\alpha_i) \right)\\
    &=& \sum_{i=0}^{k-1+\varepsilon} \rho_i  f(\alpha_i)-\sum_{j=m+1}^d f_jQ_j(n,s) \\
    &\le& \sum_{i=0}^{k-1+\varepsilon} \rho_i  h(\alpha_i)=\mathrm{ULB}_m (W,h),
\end{eqnarray*}
 where, for the last inequality, we used $f_i \geq 0$ and $Q_j(n,s) \geq 0$ for $j=m+1,\ldots,d$.

(b) We shall find an improvement of the bound ULB$_{m}(W,h)$ by using the polynomial
\[ f(t)=g(t)+\mu P_j^{(n)}(t), \]
where the constant $\mu >0$ and the polynomial $g(t)\in \mathcal{P}_{m}$ will be suitably chosen. 

We first define an auxiliary potential 
\[ \widetilde{h}(t):= h(t)-\mu P_j^{(n)}(t), \]
where $\mu>0$ is chosen in such a way that the derivatives $\widetilde{h}^{(i)}(t) \geq 0$ on $[-1,1)$ for all $i=0,1,\dots,j$. Since 
$\widetilde{h}^{(i)}(t)=h^{(i)}(t)>0$ for $i \geq j+1$, this choice of $\mu$ makes the new potential $\widetilde{h}(t)$ absolutely monotone. 

Next, we choose the polynomial $g(t)$ as the Hermite interpolant of $\widetilde{h}$ at the nodes $(\alpha_i)_{i=0}^{k-1+\varepsilon}$ 
in exactly the same way as we constructed $f$ to interpolate $h$ in Theorem  \ref{ulb}; i.e.,  
\[ g(\alpha_i)=\widetilde{h}(\alpha_i), \ \ g^\prime(\alpha_i)=\widetilde{h}^\prime(\alpha_i), \ i=0,1,\ldots,k-1+\varepsilon, \]
where the interpolation is simple if and only if $(\varepsilon,i)=(1,0)$. It follows in the same way is in Theorem  \ref{ulb} that $g \in L_{\widetilde{h}}^{(n)}$,
implying immediately that $f\in L_{h}^{(n)}$.

In what follows in the proof we compute the bound from $f$ and show that it is greater than ULB$_{m}(W,h)$. 
Let $g(t)=\sum_{\ell=0}^{m} g_\ell P_\ell^{(n)}(t)$ be the Gegenbauer expansion of $g$. Note that $f_0=g_0$ and $f(1)=g(1)+\mu$.
We have 
\begin{eqnarray*}
\sum_{i=0}^{k-1+\varepsilon} \rho_i g(\alpha_i) &=& \sum_{i=0}^{k-1+\varepsilon} \rho_i \widetilde{h}(\alpha_i) \\
          &=& \sum_{i=0}^{k-1+\varepsilon} \rho_i h(\alpha_i) -\mu \cdot \sum_{i=0}^{k-1+\varepsilon} \rho_i P_j^{(n)}(\alpha_i) \\
          &=& \mathrm{ULB}_{m}(W,h)-\mu \cdot \sum_{i=0}^{k-1+\varepsilon} \rho_i P_j^{(n)}(\alpha_i). 
\end{eqnarray*}
Since $$\sum_{i=0}^{k-1+\varepsilon} \rho_i g(\alpha_i)=g_0-\frac{g(1)}{N_W}$$ by the formula \eqref{QR} for $g$ and
$$\sum_{i=0}^{k-1+\varepsilon} \rho_i P_j^{(n)}(\alpha_i)=Q_j(n,s)-\frac{1}{N_W} $$
by the definition of the test functions \eqref{test-functions}, we obtain
\[ g_0-\frac{g(1)}{N_W}=\mathrm{ULB}_{m}(W,h)+\frac{\mu}{N_W}-\mu Q_j(n,s). \]
But the left-hand side is equal to $f_0-(f(1)-\mu)/N_W$, and so
\[ f_0-\frac{f(1)}{N_W}=\mathrm{ULB}_{m}(W,h)-\mu Q_j(n,s)>\mathrm{ULB}_{m}(W,h), \] 
showing that the bound from the polynomial $f$ is better than $\mathrm{ULB}_{m}(W,h)$. 
 \end{proof}

The better bound deduced from the polynomial $f$ from Theorem \ref{THM-TF}(b) can be computed numerically but it seems to not be the best that
can be obtained by higher degree polynomials. A technique, called skip 2-add 2, was developed for obtaining
bounds via higher degree polynomials in the equi-weighted case in \cite{BDHSS-mc}. It can be applied for weighted codes as well. 

The same test functions were defined and investigated in the case of upper bounds\footnote{In this case the universal bound is the 
Levenshtein bound.} for the maximal cardinality of spherical codes of
given dimension and maximal inner product in \cite{BDB96}. It follows from the investigation from \cite{BDB96} that the first two
nonzero test functions, $Q_{m+1}(n,s)$ and $Q_{m+2}(n,s)$, are always non-negative. This means that the assumption $Q_j(n,s)<0$
in Theorem  \ref{THM-TF} (b) is possible only for $j \geq m+3$; i.e., the degree of an improving polynomial (if any) will be at least $m+3$.

\section{Universal upper bounds for energy of weighted codes with given minimum distance}

\subsection{A general linear programming upper bound for weighted codes}

We assume now that $(C,W)$ is a weighted spherical code on $\mathbb{S}^{n-1}$ with $|C|=|W|=N$ and 
maximal inner product $s(C) \in [-1,1)$ (equivalently, minimum distance $d(C)=\sqrt{2(1-s(C)}$). In the sequel, 
we shall often omit $C$ in the notation of the maximal inner product. Then it is natural to consider upper bounds for
the quantity $\mathcal{U}^h(s,W)$ defined in \eqref{min-w}. As with Theorem \ref{t_lower} we first 
derive a general linear program. 

We now define the admissible set of polynomials for linear programming as 
\[ U_h^{(n,s)}:= \left\{g(t)=\sum_{i=0}^{\deg(g)} g_i P_i^{(n)}(t) : g(t) \geq h(t), t \in [-1,s], g_i \leq 0, i=1,\ldots,\deg(g)\right\}. \]
Then every polynomial from the set $U_h^{(n,s)}$ provides a upper bound for $\mathcal{U}^h(s,W)$ as shown in the next theorem.

\begin{theorem} \label{t_upper}
Let $s \in [-1,1)$ and $g \in U_h^{(n,s)}$. Then for every 
weighted code $(C,W)$ on $\mathbb{S}^{n-1}$ with tuple size $N$ and 
maximal inner product $s(C)=s$,
\[ E_h(C,W) \leq E_g(C,W) \leq g_0-g(1)\sum_{i=1}^{N} w_i^2 .\]

Consequently,
\begin{equation} \label{inf_uub}
\mathcal{U}^h(s,W) \leq \inf_{g \in U_h^{(n,s)}} \left(g_0-g(1)\sum_{i=1}^{N} w_i^2\right). \end{equation}
\end{theorem}

\begin{proof}
The first inequality follows obviously from $g \geq h$ in $[-1,s]$ since all inner products 
of $C$ belong to that interval; i.e.,
\[ E_h(C,W) = \sum_{i \neq j}w_iw_j h(x_i \cdot x_j) \leq \sum_{i \neq j}w_iw_j g(x_i \cdot x_j)=E_g(C,W). \]
For the second inequality, we estimate $E_g(C,W)$ from 
above as follows:
\begin{eqnarray*}
E_g(C,W) &=& \sum_{i,j}w_iw_j g(x_i \cdot x_j)-g(1)\sum_{i=1}^{N} w_i^2 \\
              &=& \sum_{\ell=0}^{\deg(g)} g_\ell \sum_{i,j} w_iw_j P_\ell^{(n)}(x_i \cdot x_j)
                      -g(1)\sum_{i=1}^{N} w_i^2 \\
              & \leq& g_0-g(1)\sum_{i=1}^{N} w_i^2.
\end{eqnarray*}
We used that  $\left(\sum_{i=1}^{N} w_i \right)^2=1$, $g_\ell \leq 0$ for $\ell \geq 1$, and $\sum_{i,j} w_iw_j P_\ell^{(n)}(x_i \cdot x_j)=M_\ell(C,W)  \geq 0$ for $\ell \geq 1$. 
Since $(C,W)$ was arbitrary with tuple size $|C|=N$ and maximal inner product $s$, \eqref{inf_uub} follows. 
\end{proof}

By analogy with the ULB case we consider the infimum in \eqref{inf_uub} over the 
class of polynomials $U_h^{(n,s)} \cap \mathcal{P}_m$. Thus, we obtain 
the linear program
\begin{equation}\label{UUB_LP} 
\left\{ 
 \begin{array}{ll}
{\rm minimize} & \displaystyle g_0-g(1)\sum_{i=1}^{N} w_i^2 \\[8pt]
{\rm subject\  to} &  g \in U_h^{(n,s)} \cap \mathcal{P}_m.
 \end{array}  \right.
\end{equation}

\subsection{A construction of feasible polynomials}

We next construct polynomials in $ U_h^{(n,s)}$ to be used in the linear program 
\eqref{UUB_LP}. We  derive the necessary parameters and then follow the approach from the equi-weighted case \cite{BDHSS-dcc}. 

Given a weighted spherical code $(C,W)$ (in particular, a weight vector $W$) with maximal inner product $s(C)=s$ we can find $N_W$ and $m$ 
as before. However, the main role here will be played by another parameter denoted by $N_q$. 

Referring to the partition $[-1,1)=\cup_{j=1}^{\infty} I_j$ (see Section 2.2), we find the unique positive integer $q$ such that 
\[ s \in I_q=\left[t_{\ell-1+\varepsilon}^{1,1-\varepsilon},t_{\ell}^{1,\varepsilon}\right), \ \ \] 
where $q=2\ell-1+\varepsilon$, $\varepsilon \in \{0,1\}$ indicates the parity of $q$. Then we proceed with introduction of the additional parameter
\[ N_q:=L_q(n,s)=\frac{f_q^{(n,s)}(1)}{f_0}, \] 
(recall that $f_{q}^{(n,s)}(t)$ is the Levenshtein polynomial corresponding to $s \in I_q$ (i.e., the polynomial, used by Levenshtein for obtaining the bound $L_q(n,s)$. Thus, $N_q$ determines upper bound parameters $(\alpha_i^{(n,s)},\rho_i^{(n,s)})_{i=0}^{\ell-1+\varepsilon}$ in the same way as $N_W$ does for the lower bound parameters $(\alpha_i,\rho_i)$ in Theorem \ref{ulb}. 

If $N_q<N$, the Levenshtein bound implies that there exist no $C \subset \mathbb{S}^{n-1}$ with $|C|=N$ and $s(C)=s$.  Therefore,
$N_q \geq N$ (with equality in the latter case following if and only if the equi-weighted code $C \subset \mathbb{S}^{n-1}$ with $|C|=N=|W|$ attains the Levenshtein bound; i.e., it is universally optimal) and $q \geq m$.
We also note that $N_q \geq N$ means that $s$ cannot be too small in $I_q$; in fact, it belongs to a subinterval of $I_q$ whose left end is the largest solution of $L_q(n,s)=N$ (see section 4.3 for explicit expressions of that left end in the cases $q=1$ and 2).

In the next theorem we construct feasible polynomials $g \in U_h^{(n,s)} \cap \mathcal{P}_q$, where $q$ is determined by $s$ as above, and 
compute the corresponding universal upper bound (UUB).

\begin{theorem} \label{uub} (UUB for weighted codes) Let $W$ be a weight vector with $N_W=1/\sum_{i=1}^N w_i^2$. 
Let $s \in I_q=[t_{\ell-1+\varepsilon}^{1,1-\varepsilon},t_{\ell}^{1,\varepsilon}]$, $q=2\ell-1+\varepsilon$, $\varepsilon \in \{0,1\}$, 
be such that $L_q(n,s)=N_q \geq N=|W|$ and $h$ satisfy $h^{(q)} \geq 0$ on $[-1,s]$. 
Then
\begin{equation} \label{uub-formula}
\mathcal{U}^h(s,W) \leq -\lambda^{*} f_0\left(1-\frac{N_q}{N_W}\right)+(g_T)_0 - \frac{g_T(1)}{N_W},
\end{equation}
where $T$ is the set of nodes in $[-1,s]$ specified below in \eqref{gt-def} and \eqref{T-def} and $g_T$ is the Hermite interpolating polynomial 
to $h$ in the nodes $T$; furthermore, $f_0$ is the zeroth Gegenbauer coefficient of the Levenshtein polynomial $f_q^{(n,s)}$ and 
$\lambda^{*}$ is a constant defined below in \eqref{l-star}.  
\end{theorem}

\begin{proof}
We consider the polynomials
\begin{equation}
\label{uub_pol}
g(t):=-\lambda f_{q}^{(n,s)}(t)+g_T(t)=\sum_{i=0}^{q} g_i P_i^{(n)}(t),
\end{equation}
where $\lambda \geq 0$ is a parameter (to be determined later) and 
\begin{equation} \label{gt-def}
g_T(t):=H_{h,T}(t)=\sum_{i=0}^{q-1} (g_T)_i P_i^{(n)}(t) 
\end{equation}
is the Hermite interpolating polynomial to the function $h(t)$ that agrees with $h(t)$ exactly in 
the points of a multiset $T$ that will be defined now. 

With $q$ in the role of $m$ from the lower bounds case, we denote $q=2\ell-1+\varepsilon$, $\varepsilon \in \{0,1\}$ giverning the parity of $q$, 
and by 
\[ \alpha_0^{(n,s)}<\alpha_1^{(n,s)}< \cdots < \alpha_{\ell-2+\varepsilon}^{(n,s)}<\alpha_{\ell-1+\varepsilon}^{(n,s)}=s \]
the roots of the Levenshtein polynomial $f_{q}^{(n,s)}(t)$. We define 
\begin{equation} \label{T-def}
T:= \left\{ \begin{array}{ll}
\{\alpha_0^{(n,s)},\alpha_0^{(n,s)},\alpha_1^{(n,s)},\alpha_1^{(n,s)},\ldots,\alpha_{\ell-2}^{(n,s)}, 
\alpha_{\ell-2}^{(n,s)}, \alpha_{\ell-1}^{(n,s)}=s\} & \mbox{if } q=2\ell-1 \\[8pt]
 \{\alpha_0^{(n,s)}=-1,\alpha_1^{(n,s)},\alpha_1^{(n,s)},\ldots,\alpha_{\ell-1}^{(n,s)}, 
\alpha_{\ell-1}^{(n,s)}, \alpha_{\ell}^{(n,s)}=s\} & \mbox{if } q=2\ell
\end{array} \right. 
\end{equation}
to be the multiset of these roots counted with their multiplicities. Note that $\deg(g_T)=q-1$.

In a sense, we use the polynomial  $f_{q}^{(n,s)}$ to ``adjust" $g_T$ in order to obtain a suitable $g \in U_h^{(n,s)}$. 
Indeed, we have $g_T(t) \geq h(t)$ for $t \in [-1,s]$ from the interpolation (as described above) and  
the error formula in the Hermite interpolation 
\[ h(t)-g_T(t)=\frac{h^{(q)}(\xi)}{q!}(t-\alpha_0^{(n,s)})^{2-\varepsilon}(t-s) 
\prod_{i=1}^{\ell-2+\varepsilon}(t-\alpha_i^{(n,s)})^2 , \quad \xi\in(-1,1), \]
(note we interpolate only the value of $h$ at $s$). Since $f_q^{(n,s)}(t) \leq 0$ for $t \in [-1,s]$ and $\lambda>0$, we conclude that 
\[ g(t)=-\lambda f_q^{(n,s)}(t)+g_T(t) \geq h(t), \ \ t \in [-1,s]. \] 

Moreover, since $f_i>0$, $i=1,\ldots,q$, in the Gegenbaeur expansion of the Levenshtein polynomial 
\[ f_{q}^{(n,s)}(t) =\sum_{i=0}^q f_i P_i^{(n)}(t) \] 
(see, e.g., \cite[Theorem 5.42]{Lev98}), it is clear that large enough $\lambda>0$ will ensure that 
\[ g_i=-\lambda f_i + (g_T)_i \leq 0, \ \ i=1,2,\ldots,q-1, \ g_q=-\lambda f_q<0 \]
showing that $g \in U_h^{(n,s)}$ for such $\lambda$. 
Finally, combining the interpolation conditions and the properties of the Levenshtein polynomial (i.e., $g_T(\alpha_i^{(n,s)})=h(\alpha_i^{(n,s)})$ and 
 $f_{q}^{(n,s)}(\alpha_i^{(n,s)})=0$), we see that $g(\alpha_i^{(n,s)})=h(\alpha_i^{(n,s)})$.
Note that $g^\prime(\alpha_i^{(n,s)})=h^\prime(\alpha_i^{(n,s)})$ follows also (except in the cases $\alpha_0^{(n,s)}=-1$ and $i=\ell-1+\varepsilon$) 
from \eqref{uub_pol} since $\left(f_q^{(n,s)}\right)^\prime (\alpha_i^{(n,s)})=0$ in all relevant cases.

Assume that we have chosen $\lambda=\lambda_0$ such that $g(t)=-\lambda_0 f_{q}^{(n,s)}(t)+g_T(t)$ belongs to $U_h^{(n,s)}$. Then the bound
provided via the so-chosen $g(t)$ in Theorem \ref{t_upper} can be calculated as follows:
\begin{eqnarray*}
\mathcal{U}^h(s,W) &\leq& g_0-g(1)\sum_{i=1}^{N} w_i^2 \\
              &=& -\lambda_0 f_0+(g_T)_0 - (-\lambda_0f_q^{(n,s)}(1)+g_T(1))\sum_{i=1}^{N} w_i^2 \\
              &=& -\lambda_0 f_0\left(1-L_q(n,s)\sum_{i=1}^{N} w_i^2\right)+(g_T)_0 - g_T(1)\sum_{i=1}^{N} w_i^2 \\
              &=&  -\lambda^{*} f_0\left(1-\frac{N_q}{N_W}\right)+(g_T)_0 - \frac{g_T(1)}{N_W}
\end{eqnarray*}
(note the presence of the coefficient $f_0$ of the Levenshtein polynomial $f_q^{(n,s)}$). We recall that 
\[ N_q =L_q(n,s)=f_q^{(n,s)}(1)/f_0 \geq N \geq N_W. \] 
The linear dependence on $\lambda_0$ of the bound means that $\lambda_0$ must be as small as possible. Summarizing the 
requirements for the best value of $\lambda_0$, which we denote by $\lambda^{*}$, we conclude that  
\begin{equation} \label{l-star}
\lambda^{*}:=\max \left\{ \frac{(g_T)_i}{f_i} : i \in I(g_T) \right\}, 
\end{equation}
where $I(g_T):=\{i \in \{1,2,\ldots,q-1\} : (g_T)_i>0\}$ is the set of the indices of the positive coefficients in the 
Gegenbauer expansion of $g_T$ (we choose $\lambda^{*}=0$ if the set $I(g_T)$ is empty). This completes the proof. 
\end{proof}

We emphasize the difference between the definitions of the nodes $(\alpha_i)_{i=0}^{k-1+\varepsilon}$ for 
the lower bounds (Section 2) and $(\alpha_i^{(n,s)})_{i=0}^{\ell-1+\varepsilon}$ for the upper bounds 
(this section). In the case of ULB we find the nodes via the equation \eqref{L}; i.e., they are defined via $N_W=1/\sum_{i=1}^N w_i^2$, 
while for UUB we derive them as roots of the polynomial $f_{q}^{(n,s)}(t)$ used for obtaining the Levenshtein bound $L_q(n,s)$ for the 
given $s$; i.e., they are defined via the number $s$.

\begin{remark} If $h$ is absolutely monotone, then $g_T$ is positive definite as in Theorem \ref{ulb}; i.e. $I(g_T)=\{1,2,\ldots,q-1\}$ and 
\begin{equation} \label{l-star-abs}
\lambda^{*}:=\max \left\{ \frac{(g_T)_1}{f_1}, \ldots, \frac{(g_T)_{q-1}}{f_{q-1}}
\right\}, \ q>1,
\end{equation}
and $\lambda^{*}=0$ for $q=1$. 
\end{remark}

The quadrature rule \eqref{QR} for $g_T$ with $N_q$ gives
\[ (g_T)_0-\frac{g_T(1)}{N_q}=\sum_{i=0}^{\ell-1+\varepsilon} \rho_i^{(n,s)}g_T(\alpha_i^{(n,s)})=
\sum_{i=0}^{\ell-1+\varepsilon} \rho_i^{(n,s)}h(\alpha_i^{(n,s)}), \]
where $g_T(\alpha_i^{(n,s)})=h(\alpha_i^{(n,s)})$, $i=0,1,\dots,\ell-1+\varepsilon$, from the interpolation. Using this, we can write the bound \eqref{uub-formula} as follows:
\[ \mathcal{U}^h(s,W) \leq \left(-\lambda^{*}f_0+\frac{g_T(1)}{N_q}\right)\left(1-\frac{N_q}{N_W}\right) +
\sum_{i=0}^{\ell-1+\varepsilon} \rho_i^{(n,s)}h(\alpha_i^{(n,s)}). \]
This formula involves the potential $h$ explicitly. 
%

\begin{remark}
We remark that in \cite[Theorem 3.2]{BDHSS-dcc} the requirement of absolute monotonicity of $h$ can be weakened in the same way as 
in Theorem \ref{uub}. 
\end{remark}

\begin{remark} \label{mq} Let $q^\prime$ be a positive integer in $[m,q]$. As proved in \cite[Section 4]{Lev92}, the function $L_{q^\prime}(n,s)$ can be defined in a larger interval whose left end coincides with the left end of $I_{q^\prime}$ and whose right end is $t_r^{0,\epsilon}>t_r^{1,\epsilon}$, where $q^\prime=2r-1+\epsilon$. The image of this extension is the interval $\left[D(n,q^\prime),+\infty \right)$. Our $s$ can belong to such an extended interval, where we
can find $N_{q^\prime}=L_{q^\prime}(n,s)$ and can proceed with upper bounds as in Theorem \ref{uub}. However, such bounds will involve less nodes and weights and will be weaker.  
\end{remark}

\subsection{Small degrees UUB}  \label{small-d-upper}

We present explicitly degree one and two UUB. They are valid for $N_q \in (D(n,q),D(n,q+1)]$ and $s$ in a certain interval 
whose left endpoint is determined from the Levenshtein bound $N_q=L_q(n,s) \geq N$.  

For $s \in [-1/(N-1),-1/n] \supset I_1$ to generate $N_1 \in (N,n+1]$ we consider the degree one UUB 
\eqref{uub-formula}, where the parameters are determined as follows: $q=1$, $L_1(n,s)=(s-1)/s=N_1$ as in section \ref{small-d-lower},
\[ g(t)=-\lambda f_1^{(n,s)}(t)+g_T(t)=-\lambda (t-s)+g_T(t) \]
is our linear programming polynomial \eqref{uub_pol}, 
and $\alpha_0^{(n,s)}=s$, $\rho_0^{(n,s)}=1/(1-s)$
are the corresponding Levenshtein's parameters. Then the polynomial $g_T(t)$ has degree $q-1=0$; i.e., it is a constant which is found from the 
interpolation equality $g_T(s)=h(s)$. Thus $\lambda^{*}=0$ and we find $f(t)=h(s)$ giving the 
(trivial) bound  
\[ \mathcal{U}^h(s,W) \leq \left(1-\sum_{i=1}^{N} w_i^2\right)h(s)=\left(1-\frac{1}{N_W}\right)h(s). \]
Indeed, this bound is straightforward upon estimating all terms in the energy sum $E_h(C,W)$ from above 
by the constant $h(s)$ and taking into account that $w_1+\cdots+w_N=1$.

For$s \in [(N-2n)/n(N-2),0] \supset I_2$ to give $N_2 \in (N,2n]$ we consider \eqref{uub-formula} for degree $q=2$. 
Let 
\[ N_2=L_2(n,s)=2n(1-s)/(1-ns). \] 
We construct the UUB polynomial
\[ g(t)=-\lambda f_2^{(n,s)}(t)+g_T(t)=-\lambda (t+1)(t-s)+g_T(t) \]
as described in Theorem \ref{uub}. The Levenshtein's parameters can be computed as in section \ref{small-d-lower}
but we need $\alpha_0^{(n,s)}=-1$ and $\alpha_1^{(n,s)}=s$ only. 
The degree one polynomial $g_T(t)$ is found from the interpolation set 
$T=\{-1,s\}$; whence $g_T(-1)=h(-1)$ and $g_T(s)=h(s)$. Thus,
\[ g_T(t)=\frac{h(s)-h(-1)}{1+s}\cdot t+\frac{h(s)+sh(-1)}{1+s}. \]
The coefficient $\lambda^{*}$ is chosen to make $g_1=0$ in the Gegenbauer expansion $g(t)=g_0+g_1P_1^{(n)}(t)+g_2P_2^{(n)}(t)$
 (there is only one element in the set from \eqref{l-star}). This gives 
\[ \lambda^{*}=\frac{h(s)-h(-1)}{1-s^2} \] and, therefore, 
\[ g(t)=-\frac{h(s)-h(-1)}{1-s^2} \cdot t^2+\frac{h(s)-s^2h(-1)}{1-s^2}. \]
We can use now directly Theorem \ref{t_upper}. Since
\[ g_0=\frac{(n-1)h(s)+(1-ns^2)h(-1)}{n(1-s^2)}, \ \ g(1)=h(-1), \]
we obtain 
\[ \mathcal{U}^h(s,W) \leq  g_0-\frac{g(1)}{N_W}= \frac{(n-1)h(s)+(1-ns^2)h(-1)}{n(1-s^2)}-\frac{h(-1)}{N_W}. \] 

The degree three UUB is quite complicated to be stated here. However, numerical calculations are feasible as in the case of ULB. 

\subsection{Examples}

We compute the UUB from Theorem \ref{uub} for the cases discussed in Section 2. Thereby, we obtain a strip where 
the weighted $h$-energy belongs for given $n$, $h$, and $W$ (and $s$ for the upper bounds). 

\begin{example} \label{32u}
We consider the case of the weighted code $(C_{32},W) \subset \mathbb{S}^2$ 
from Example \ref{32}. Since the maximal inner product of $C_{32}$ is 
\[ s=s(C_{32})=\sqrt{1+2/\sqrt{5}}/\sqrt{3} \approx 0.794654 \in I_9 = [0.765055\ldots, 0.80293\ldots] \]
(this is the constant $b$ from Example \ref{32}), we compute 
the UUB for $ \mathcal{U}^h(s,W)$ for the parameters $n=3$, the above $s$ implying that $q=9$, $N_9=L_9(3,s)= 34.4268...$, and for the 
Coulomb potential  $h(t)=1/\sqrt{2(1-t)}$.

The interpolation nodes are the roots of the Levenshtein polynomial $f_9^{(3,s)}$; i.e.,
\[ (\alpha_i^{(3,s)})_{i=0}^4 \approx (-0.9247,-0.6213,-0.1493,0.3703,0.7946). \]
The polynomial $g_T$ is positive definite (i.e., $(g_T)_i \geq 0$ for all $i$ and we use \eqref{l-star-abs}; 
this is, in fact, part of more general property) and
\[ \lambda^{*}=\frac{(g_T)_1}{f_1} \approx 7.47994 \]
via \eqref{l-star}. Thus our UUB polynomial $g$ has $g_1=0$ and $g_i<0$ for $2 \leq i \leq 9$. Finally, the UUB is
\[ \mathcal{U}^h(s,W) \leq 0.8234054. \]
We recall from Example \ref{32} that the actual Coulomb energy of $(C_{32},W)$ is $\approx 0.8050318$ and the ULB is $\approx 0.804786$.
\hfill $\Box$
\end{example}

\begin{example} \label{cube+biortho-u}
We compute the UUB for the parameters from the weighted codes from Example \ref{cube+biortho}.
For convenience and comparison, we recall the actual energy and the ULB from that example. The results are shown in Table \ref{tab:table4}.

\begin{table}[h!]
  \centering
  \caption{Approximate parameters and UUB for $(n,N,N_W,s,N_q)=(n,2n+2^n,N_W,s,N_q)$, $3 \leq n \leq 7$, $h(t)=(2(1-t))^{-(n-2)/2}$.}
  \label{tab:table4}
  \begin{tabular}{|c|c|c|c|c|c|c|c|c|} \hline
    $n$ & $N_W$ & $N$ & $s$ & $q$ & $N_q$ & $ULB$ & Energy of $(C_{qp},W)$ & $UUB$ \\
    \hline      
    $3$ & $13.95$ & $14$ & $1/\sqrt{3}$ & $5$ & $16.098$ & $0.7058$ & $0.7070$ & $0.7357$ \\ 
    \hline
    $4$ & $24$ & $24$ & $1/2$ & $5$ & $26$ & $0.5781$ & $0.5798$ & $0.5988$ \\ 
    \hline
    $5$ & $41.48$ & $42$ & $3/5$ & $6$ & $81.351$ & $0.4825$ & $0.4901$ & $0.708$ \\ 
    \hline
    $6$ & $71.44$ & $76$ & $2/3$ & $7$ & $289.561$ & $0.4074$ & $0.4314$ & $1.0421$ \\     
    \hline
    $7$ & $121.16$ & $142$ & $5/7$ & $8$ & $2228.146$ & $0.3462$ & $0.3993$ & $1.9464$ \\ 
    \hline
  \end{tabular}
\end{table}

\end{example}

%
%

\section{ULB and UUB for weighted spherical designs}\label{ULB_Design}

In the case when a weighted spherical code is also a weighted design, the ULB 
and UUB may be extended to wider class of potentials, which we now describe. 

\subsection{Properties of weighted spherical designs} 

We shall need the following equivalent definition of weighted $\tau$-designs. It was used as primary definition of cubatures in \cite{GS81}. 

We denote by ${\mathcal P}_{\tau,n}$ the space of polynomials in $n$ variables of degree at most $ \tau$ and $\mathbb{H}_\ell^n$ the subspace of homogeneous spherical harmonics of degree $\ell\leq \tau$, where $Z(n,\ell):=\dim (\mathbb{H}_\ell^n)$ and $\{ Y_{\ell,k}\}_{k=1}^{Z(n,\ell)}$ is an orthonormal basis of $\mathbb{H}_\ell^n$. We recall that a real-valued function on $\mathbb{S}^{n-1}$ is called a {\em spherical harmonic of degree $\ell$} if it is the restriction of a homogeneous polynomial $Y$ in $n$ variables of degree $\ell$ that is harmonic, i.e. for which $\triangle Y \equiv 0$. 

\begin{lemma}\label{SphericalQuadrature} A weighted spherical code $(C,W)\subset \mathbb{S}^{n-1}$, $C=(x_1,\ldots,x_N)$, is a weighted 
spherical $\tau$-design if and only if the following quadrature formula
\begin{equation}\label{SphQu} 
\int_{\mathbb{S}^{n-1}} p( y)\, d\sigma_n(y)=\sum_{i=1}^N w_i p(x_i),\end{equation}
holds for all polynomials in $n$ variables of total degree at most $\tau$. Here $\sigma_n$ is the  Lebesgue surface measure of $\mathbb{S}^{n-1}$ normalized so that $\sigma_n(\mathbb{S}^{n-1})=1$.
\end{lemma}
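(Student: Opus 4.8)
The plan is to pass between the moment condition $M_\ell(C,W)=0$ defining a weighted $\tau$-design (Definition \ref{w-des}) and the quadrature identity \eqref{SphQu} through the harmonic decomposition of polynomials on the sphere together with the addition formula for Gegenbauer polynomials. First I would recall that the restriction to $\mathbb{S}^{n-1}$ of the space of polynomials in $n$ variables of total degree at most $\tau$ decomposes as the orthogonal direct sum $\bigoplus_{\ell=0}^{\tau}\mathrm{Harm}_\ell$ of spaces of spherical harmonics, where $\mathrm{Harm}_\ell$ has dimension $r_\ell:=\dim\mathrm{Harm}_\ell\ge 1$ and an orthonormal basis $\{Y_{\ell,j}\}_{j=1}^{r_\ell}$ with respect to $\sigma_n$. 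By linearity, \eqref{SphQu} holds for all polynomials of degree at most $\tau$ if and only if it holds for each basis element $Y_{\ell,j}$ with $0\le \ell\le\tau$.

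Next I would dispose of the individual harmonic levels. For $\ell=0$ the function $Y_{0,1}$ is constant and the identity reduces to $\sum_{i} w_i=\sigma_n(\mathbb{S}^{n-1})=1$, which holds by hypothesis; so the degree-$0$ case is automatic. For $1\le\ell\le\tau$, each nonconstant harmonic integrates to zero, $\int_{\mathbb{S}^{n-1}}Y_{\ell,j}\,d\sigma_n=0$, so \eqref{SphQu} applied to $Y_{\ell,j}$ is exactly the vanishing of the discrete functional $\Lambda_{\ell,j}:=\sum_{i=1}^N w_i Y_{\ell,j}(x_i)$. Hence \eqref{SphQu} holds up to degree $\tau$ if and only if $\Lambda_{\ell,j}=0$ for all $1\le\ell\le\tau$ and all $j$.

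The bridge back to the moments is the addition formula $\sum_{j=1}^{r_\ell}Y_{\ell,j}(x)Y_{\ell,j}(y)=r_\ell P_\ell^{(n)}(x\cdot y)$, valid with the normalization $P_\ell^{(n)}(1)=1$. Substituting this into the definition \eqref{WeightedMoments} and interchanging the summations, I would obtain the sum-of-squares representation
\[
M_\ell(C,W)=\frac{1}{r_\ell}\sum_{j=1}^{r_\ell}\Big(\sum_{i=1}^N w_i Y_{\ell,j}(x_i)\Big)^{2}=\frac{1}{r_\ell}\sum_{j=1}^{r_\ell}\Lambda_{\ell,j}^{2}.
\]
Since this is a nonnegative combination of squares, $M_\ell(C,W)=0$ if and only if $\Lambda_{\ell,j}=0$ for every $j$. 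Combining this with the previous paragraph, the condition $M_\ell(C,W)=0$ for all $1\le\ell\le\tau$ is equivalent to \eqref{SphQu} holding for all polynomials of total degree at most $\tau$, which is the asserted equivalence.

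I would flag the harmonic decomposition and the addition formula as the only substantive ingredients; everything else is linearity and bookkeeping. The one point deserving care — and the place where a hasty argument could leave a gap — is that restrictions of degree-$\le\tau$ polynomials to $\mathbb{S}^{n-1}$ genuinely exhaust $\bigoplus_{\ell=0}^{\tau}\mathrm{Harm}_\ell$ (using $|y|=1$ to absorb even powers), so that testing \eqref{SphQu} against the finitely many harmonics $Y_{\ell,j}$ really captures all admissible test polynomials.
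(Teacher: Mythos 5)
Your proposal is correct and follows essentially the same route as the paper's proof: decompose the restricted polynomial space into spherical harmonics, handle $\ell=0$ via $\sum_i w_i=1$, use that nonconstant harmonics integrate to zero, and apply the addition formula to write $M_\ell(C,W)$ as a normalized sum of squares of the discrete functionals $\sum_i w_i Y_{\ell,j}(x_i)$, whose simultaneous vanishing is equivalent to the quadrature identity. The sum-of-squares representation and the resulting equivalence are exactly the paper's argument.
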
 

\begin{proof} The proof is similar to the proof in the equi-weighted case. 
 Since the restriction  ${\mathcal P}_{\tau,n} \vert_{\mathbb{S}^{n-1}}$ is a direct sum of the orthogonal subspaces $\mathbb{H}_\ell^n$, $\ell=0,1,\dots,\tau$, it suffices to prove the lemma for $p\in \mathbb{H}_\ell^n$, $\ell\leq \tau$. 

Since $w_1+\dots+w_N=1$, the quadrature \eqref{SphQu} holds trivially for the constant polynomial; i.e. for $\ell=0$. Let us fix $1\leq \ell\leq \tau$. Then the left-hand side of \eqref{SphQu} vanishes as $p$ is harmonic and homogeneous of degree at least $1$ (the mean-value property holds). From the Addition formula \cite[Theorem 2]{M} (see also \cite[Formula $(5.1.14)$]{BHS}) we have
\[ \sum_{k=1}^{Z(n,\ell)} Y_{\ell,k}(x_i)Y_{\ell,k}(x_j)=Z(n,\ell)P_\ell^{(n)}(x_i\cdot x_j), \]
which implies that 
$$M_\ell(C,W)=\frac{1}{Z(n,\ell)}\sum_{k=1}^{Z(n,\ell)} \left( \sum_{i=1}^N w_i Y_{\ell,k}(x_i)\right)^2.$$
Thus, if $(C,W)$ is a weighted $\tau$-design, then $M_\ell(C,W)=0$ and the right-hand side of \eqref{SphQu} holds true for all $\{ Y_{\ell,k}\}_{k=1}^{Z(n,\ell)}$ and hence for all $p\in \mathbb{H}_\ell^n$. On the other hand, if the quadrature \eqref{SphQu} holds for all $p\in \mathbb{H}_\ell^n$, then it holds for all $Y_{\ell,k}$, which implies $M_\ell(C,W)=0$ and completes the proof.
\end{proof}

Next, we utilize the identity
\begin{equation} \label{main-identity}
f(1) \cdot \sum_{i=1}^N w_i^2 +\sum_{i \neq j} w_iw_j f(x_i \cdot x_j)=f_0 + \sum_{\ell=1}^{\tau} f_\ell M_\ell(C,W) 
\end{equation}
(holding for $\deg(f) \leq \tau$; used in the proof of Theorem \ref{ulb}) to see the following characterization of weighted spherical designs.

\begin{theorem}
\label{f0-designs}
A weighted spherical code $(C,W)$ on $\mathbb{S}^{n-1}$ is a weighted spherical $\tau$-design if and only if
\begin{equation} \label{inner-products-f}
\sum_{i \neq j} w_iw_j f(x_i \cdot x_j)=f_0 -f(1) \cdot \sum_{i=1}^N w_i^2
\end{equation}
holds for any real polynomial of degree at most $\tau$.
\end{theorem}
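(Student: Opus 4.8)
The plan is to prove the biconditional in Theorem~\ref{f0-designs} by leveraging the algebraic identity \eqref{main-identity}, which rewrites the weighted energy of any polynomial $f$ of degree at most $\tau$ in terms of its zeroth Gegenbauer coefficient $f_0$ and its weighted moments $M_\ell(C,W)$. The key observation is that \eqref{inner-products-f} is precisely the statement that the ``correction term'' $\sum_{\ell=1}^{\tau} f_\ell M_\ell(C,W)$ in \eqref{main-identity} vanishes for every polynomial $f$ of degree at most $\tau$. So the entire proof reduces to showing that this sum vanishes for all such $f$ if and only if $(C,W)$ is a weighted $\tau$-design.

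First I would establish the forward direction. Assuming $(C,W)$ is a weighted spherical $\tau$-design, Definition~\ref{w-des} gives $M_\ell(C,W)=0$ for all $1 \leq \ell \leq \tau$. Substituting into the right-hand side of \eqref{main-identity}, every term in the sum $\sum_{\ell=1}^{\tau} f_\ell M_\ell(C,W)$ is zero, so the identity collapses to $f(1)\sum_{i=1}^N w_i^2 + \sum_{i\neq j} w_iw_j f(x_i\cdot x_j) = f_0$, which upon rearrangement is exactly \eqref{inner-products-f}. This direction is immediate and requires no further work.

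For the converse, I would argue by contrapositive or by a direct extraction argument. Suppose \eqref{inner-products-f} holds for every polynomial of degree at most $\tau$; combining it with \eqref{main-identity} forces $\sum_{\ell=1}^{\tau} f_\ell M_\ell(C,W)=0$ for every such $f$. The crucial step is to isolate a single moment: since the Gegenbauer polynomials $\{P_\ell^{(n)}\}_{\ell=0}^{\tau}$ form a basis for $\mathcal{P}_\tau$, for any fixed $\ell_0 \in \{1,\ldots,\tau\}$ I can choose $f = P_{\ell_0}^{(n)}$, so that $f_{\ell_0}=1$ and all other coefficients vanish. This yields $M_{\ell_0}(C,W)=0$. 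Since $\ell_0$ was arbitrary, all moments $M_1,\ldots,M_\tau$ vanish and $(C,W)$ is a weighted $\tau$-design by Definition~\ref{w-des}.

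I do not anticipate a serious obstacle here, as the proof is essentially a linear-algebra extraction once identity \eqref{main-identity} is in hand; the main point is simply recognizing that testing against the basis elements $P_{\ell_0}^{(n)}$ decouples the moment conditions. The only mild subtlety worth noting is that one must restrict attention to $\ell \geq 1$ when extracting moments (the $\ell=0$ term corresponds to $f_0$ and carries no design information), but the formulation of \eqref{inner-products-f}, in which $f_0$ appears explicitly on the right, already accounts for this cleanly.
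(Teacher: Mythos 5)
Your proof is correct, and in the converse direction it takes a genuinely (if mildly) different route from the paper. The forward direction is identical: substitute $M_\ell(C,W)=0$ into \eqref{main-identity}. For the converse, you test \eqref{inner-products-f} against each Gegenbauer polynomial $P_{\ell_0}^{(n)}$ separately, which isolates $M_{\ell_0}(C,W)=0$ one index at a time; this is a pure basis-decoupling argument requiring no additional input. The paper instead uses a single test polynomial, $f(t)=\sum_{i=0}^{\tau}P_i^{(n)}(t)$, obtaining only $\sum_{\ell=1}^{\tau}M_\ell(C,W)=0$, and then concludes that each summand vanishes --- a step that silently relies on the nonnegativity $M_\ell(C,W)\geq 0$, which comes from Schoenberg's positive definiteness of the Gegenbauer polynomials as recalled in the Introduction. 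Your argument is therefore slightly more self-contained (it would go through even if the individual moments could be negative), while the paper's is more economical (one test polynomial suffices) at the cost of invoking that external fact. Both arguments are complete and valid.
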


\begin{proof}
If $(C,W)$  is a weighted spherical $\tau$-design on $\mathbb{S}^{n-1}$, then $M_{\ell}(C,W)=0$ for $1 \leq \ell \leq \tau$
yields \eqref{inner-products-f} directly from \eqref{main-identity}. 

Conversely, assume that \eqref{inner-products-f} holds for any polynomial $f$ of degree at most $\tau$. Then with the polynomial 
$f(t)=\sum_{i=0}^\tau P_i^{(n)}(t)$ in \eqref{main-identity} and \eqref{inner-products-f} we obtain that
\[ \sum_{\ell=1}^\tau M_\ell(C,W)=0; \]
whence $M_{\ell}(C,W)=0$ for $1 \leq \ell \leq \tau$.
\end{proof}

The condition \eqref{inner-products-f} can be further specialized as we shall see in the next theorem, where we prove that the left-hand side 
of \eqref{inner-products-f} decomposes into $N$ equal parts (when $x=x_i \in C$).

\begin{theorem}
\label{f0-designs-by-points}
A weighted spherical code $(C,W)$ on $\mathbb{S}^{n-1}$ is a weighted spherical $\tau$-design if and only if
for any point $x \in \mathbb{S}^{n-1}$ the equality 
\begin{equation} \label{inner-products-by-points}
\sum_{j=1}^N w_j f(x \cdot x_j)=f_0 
\end{equation}
holds for any real polynomial $f$ of degree at most $\tau$ and every $i=1,2,\ldots,N$.
\end{theorem}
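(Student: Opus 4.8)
The plan is to derive both implications from two tools already at hand: the quadrature characterization of Lemma \ref{SphericalQuadrature} and the moment characterization of Theorem \ref{f0-designs}. The key observation I would isolate first is that, for a fixed $x \in \mathbb{S}^{n-1}$ and any univariate polynomial $f$ with $\deg f \le \tau$, the function $y \mapsto f(x \cdot y)$ is a polynomial in $y$ of total degree at most $\tau$, because $x \cdot y$ is linear in $y$. This places it inside the class of functions to which the quadrature \eqref{SphQu} applies, which is what makes the whole argument go through.

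For the forward implication I would assume $(C,W)$ is a weighted spherical $\tau$-design and apply Lemma \ref{SphericalQuadrature} to $p(y) = f(x \cdot y)$, obtaining
\[ \sum_{j=1}^N w_j f(x \cdot x_j) = \int_{\mathbb{S}^{n-1}} f(x \cdot y)\, d\sigma_n(y). \]
It then remains to show the right-hand side equals $f_0$. Expanding $f = \sum_\ell f_\ell P_\ell^{(n)}$ and invoking the addition formula exactly as in Lemma \ref{SphericalQuadrature} to write $P_\ell^{(n)}(x \cdot y) = Z(n,\ell)^{-1}\sum_k Y_{\ell,k}(x) Y_{\ell,k}(y)$, each term with $\ell \ge 1$ integrates to zero, since spherical harmonics of positive degree have zero mean on the sphere; only the $\ell = 0$ term survives and contributes $f_0$. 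This establishes \eqref{inner-products-by-points}. Alternatively, I could use the projection identity $\int_{\mathbb{S}^{n-1}} f(x \cdot y)\, d\sigma_n(y) = \int_{-1}^1 f(t)\, d\mu_n(t) = f_0$, the last equality holding because $P_0^{(n)} \equiv 1$ and $\mu_n$ is a probability measure.

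For the converse I would only need \eqref{inner-products-by-points} at the nodes $x = x_i$. Multiplying $\sum_{j=1}^N w_j f(x_i \cdot x_j) = f_0$ by $w_i$, summing over $i$, and using $\sum_i w_i = 1$ gives $\sum_{i,j} w_i w_j f(x_i \cdot x_j) = f_0$. Separating the diagonal terms $i = j$, whose contribution is $f(1) \sum_i w_i^2$, produces precisely the identity \eqref{inner-products-f}, and Theorem \ref{f0-designs} then yields that $(C,W)$ is a weighted spherical $\tau$-design.

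I expect the only step with genuine content to be the evaluation of the zonal integral in the forward direction; the remaining manipulations are just reindexing of double sums. I do not anticipate a real obstacle there, as the vanishing of the positive-degree terms is the same mean-value property already used in proving Lemma \ref{SphericalQuadrature}. The one subtlety I would flag is the asymmetric role of $x$: the forward direction delivers \eqref{inner-products-by-points} for every $x \in \mathbb{S}^{n-1}$, whereas the converse consumes it only at the finitely many nodes $x_i$, so the stated equivalence is robust under either reading of the hypothesis.
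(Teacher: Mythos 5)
Your proof is correct. The forward direction is essentially identical to the paper's: apply the quadrature of Lemma \ref{SphericalQuadrature} to the polynomial $p(y)=f(x\cdot y)$ (of total degree at most $\tau$ in $y$) and evaluate the zonal integral $\int_{\mathbb{S}^{n-1}} f(x\cdot y)\,d\sigma_n(y)=\int_{-1}^1 f(t)\,d\mu_n(t)=f_0$; your ``projection identity'' is precisely the Funk--Hecke formula that the paper cites, and your addition-formula computation is just its proof. The converse differs in a small but genuine way: the paper plugs $f=P_\ell^{(n)}$, $1\le\ell\le\tau$, into the averaged identity $\sum_{i,j}w_iw_jf(x_i\cdot x_j)=f_0$ and reads off $M_\ell(C,W)=0$ directly from the definition \eqref{WeightedMoments} of the moments, whereas you separate the diagonal terms to recover \eqref{inner-products-f} and then invoke Theorem \ref{f0-designs}. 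Both are one-line finishes; the paper's is self-contained (it never uses Theorem \ref{f0-designs}), while yours makes explicit that the pointwise characterization \eqref{inner-products-by-points}, once averaged over the code, collapses to the global characterization \eqref{inner-products-f}, which displays the three characterizations (vanishing moments, \eqref{inner-products-f}, \eqref{inner-products-by-points}) as successively stronger-looking but equivalent statements. Your closing remark on the asymmetric role of $x$ --- the forward direction yields \eqref{inner-products-by-points} for every $x\in\mathbb{S}^{n-1}$, while the converse consumes it only at the nodes $x_i$ --- matches the paper's proof and correctly resolves the slightly garbled quantification in the theorem's statement.
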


\begin{proof}
If $(C,W)$  is a weighted spherical $\tau$-design on $\mathbb{S}^{n-1}$, then $M_{\ell}(C,W)=0$ for $1 \leq \ell \leq \tau$. 
Let $f(t)=\sum_{k=0}^\tau f_k P_k^{(n)}(t)$ be the Gegenbauer expansion of $f$. Then for every $i=1,2,\dots,N$ we have that 
$p(x):=\sum_{j=1}^N w_j f(x \cdot x_j)\in \mathcal{P}_{\tau,n}$ and from Lemma \ref{SphericalQuadrature} and the Funk-Hecke formula
we derive that
\[\sum_{j=1}^N w_j f(x \cdot x_j)=\int_{\mathbb{S}^{n-1}}f(x \cdot y)\, d\sigma_n(y)=\int_{-1}^1 f(t)\, d\mu_n(t)=f_0.\]

Suppose now that the identity \eqref{inner-products-by-points} holds for all $i=1,2,\dots,N$. Applying it for $x_i \in C$, then 
multiplying by $w_i$ and adding for all $i$, we utilize $f(t)=P_\ell^{(n)}(t)$ in \eqref{inner-products-by-points} to derive that $M_\ell (C,W)=0$ for $1\leq \ell\leq \tau$. 
\end{proof}

Theorem \ref{f0-designs-by-points} is the weighted analog of the property of designs in the equi-weighted case expressed by \cite[Equation (1.10)]{FL95}. 
The identity \eqref{inner-products-by-points} from Theorem \ref{f0-designs-by-points} could be used with $x \in C$ to estimate the 
frequency and location of inner products of $C$ 
via the polynomial $f$. Such approach was used in \cite{BBD99} for 
obtaining bounds on the extreme (smallest and largest) inner products that imply some nonexistence results for (equi-weighted) spherical 
designs of odd strengths and odd cardinalities. 

Applications of Theorem \ref{f0-designs-by-points} for obtaining bounds for polarization of weighted spherical designs will
be considered in a future paper.

\subsection{ULB on energy for weighted spherical $\tau$-designs}

Assume that $(C,W)$ is a weighted spherical $\tau$-design on $\mathbb{S}^{n-1}$
such that $N_W \in \left(D(n,\tau),D(n,\tau+1)\right]$. Since all the moments $M_i(C,W)$,  $1 \leq i \leq \tau$, are equal to $0$, 
we do not need the conditions $f_i \geq 0$ for $i=1,2,\ldots,\tau$. The good set of polynomials will be
\[ L_h^{(n,\tau)}:= \left\{f(t)=\sum_{i=0}^{\tau} f_i P_i^{(n)}(t) : f(t) \leq h(t), t \in [-1,1)\right\}, \]
For the same reason, we can relax the conditions of the derivatives of the potential function $h$ to the requirement $h^{(\tau+1)} \geq 0$ to be used only in the Hermite error formula. 

With these observations, we restate Theorem \ref{ulb} as ULB for weighted designs. 

\begin{theorem}
\label{ulb-designs} (ULB for weighted designs) Let $(C,W)$ be a weighted spherical $\tau$-design, $\tau=2k-1+\varepsilon$, $\varepsilon \in \{0,1\}$, and 
$W$ be such that $N_W$ satisfies \eqref{int-sum} with $m=\tau$. If the potential function $h$ satisfies $h^{(\tau+1)}\geq 0$, then
\begin{equation} \label{ulb-formula-des}
E_h(C,W) \geq \sum_{i=0}^{k-1+\varepsilon} \rho_i h(\alpha_i), 
\end{equation}
where the parameters $(\alpha_i, \rho_i)_{i=0}^{k-1+\varepsilon}$ are the same as in Theorem \ref{ulb}. 
This bound cannot be improved by any polynomial $f \in L_h^{(n,\tau)}$.
\end{theorem}

\begin{proof} With parameters and interpolation as in Theorem \ref{ulb}, we see that $f \in L_h^{(n,\tau)}$ and 
\[ E_h(C,W) \geq f_0-\frac{f(1)}{N_W}=\sum_{i=0}^{k-1+\varepsilon} \rho_i h(\alpha_i). \]

\end{proof}

In the case of absolutely monotone potential $h$, the bounds \eqref{ulb-formula} and \eqref{ulb-formula-des} are both valid and coincide.

\subsection{UUB for weighted spherical $\tau$-designs}

As was the case for the ULB for weighted designs, the condition $g_\ell \leq 0$ for the linear 
programming polynomials for UUB is no longer necessary for $1 \leq \ell \leq \tau$
since $M_\ell(C,W)=0$ in \eqref{main-identity} for these $\ell$. 

We first reformulate Theorem \ref{t_upper}. We consider the set of good polynomials as
\[ V_h^{(n,s,\tau)}:= \left\{g(t)=\sum_{i=0}^{\deg(g)} g_i P_i^{(n)}(t) : \ g(t) \geq h(t), \ t \in [-1,s], \ \deg(g) \leq \tau \right\}. \]

\begin{theorem} \label{t_upper_designs}
If $g \in V_h^{(n,s,\tau)}$, then for every weighted 
spherical $\tau$-design $(C,W)$ on $\mathbb{S}^{n-1}$ with cardinality (size) $N$ and 
maximal inner product $s=s(C) \in [-1,1)$,
\[ E_h(C,W) \leq E_g(C,W) = g_0-g(1)\sum_{i=1}^N w_i^2 .\]
Consequently,
\begin{equation} \label{inf_uub_des}
E_h(C,W) \leq \inf_{g \in V_h^{(n,s,\tau)}} \left(g_0-g(1)\sum_{i=1}^N w_i^2\right). \end{equation}
\end{theorem}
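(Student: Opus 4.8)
The plan is to mirror the proof of Theorem \ref{t_upper}, but to exploit the design property of $(C,W)$ so that the vanishing of the weighted moments replaces the coefficient sign constraints $g_\ell \leq 0$; this will upgrade the middle estimate to an exact equality.

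First I would establish the pointwise bound $E_h(C,W) \leq E_g(C,W)$. Since $s=s(C)$ is the maximal inner product of $C$, every off-diagonal inner product $x_i \cdot x_j$ with $i \neq j$ lies in $[-1,s]$, where $g \geq h$ by the defining property of $V_h^{(n,s,\tau)}$. Multiplying by the nonnegative weights $w_iw_j$ and summing over $i \neq j$ yields
\[ E_h(C,W)=\sum_{i\neq j}w_iw_j h(x_i\cdot x_j)\leq \sum_{i\neq j}w_iw_j g(x_i\cdot x_j)=E_g(C,W), \]
exactly as in the first step of Theorem \ref{t_upper}.

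For the equality $E_g(C,W)=g_0-g(1)\sum_{i=1}^N w_i^2$, I would invoke Theorem \ref{f0-designs} directly. Since $g\in V_h^{(n,s,\tau)}$ has degree at most $\tau$ and $(C,W)$ is a weighted spherical $\tau$-design, identity \eqref{inner-products-f} applies to $f=g$ and gives precisely $\sum_{i\neq j}w_iw_j g(x_i\cdot x_j)=g_0-g(1)\sum_{i=1}^N w_i^2$. Alternatively, and self-containedly, one expands $g=\sum_{\ell=0}^{\deg(g)}g_\ell P_\ell^{(n)}$, separates the diagonal contribution $g(1)\sum_i w_i^2$, uses $\big(\sum_i w_i\big)^2=1$ for the $\ell=0$ term, and observes that $M_\ell(C,W)=0$ for $1\leq \ell\leq \deg(g)\leq \tau$ annihilates every higher term.

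Combining the two steps gives $E_h(C,W)\leq g_0-g(1)\sum_{i=1}^N w_i^2$ for every $g\in V_h^{(n,s,\tau)}$, and taking the infimum over this set yields \eqref{inf_uub_des}. There is no genuine obstacle here; the only point requiring care is that the degree bound $\deg(g)\leq \tau$ is exactly what lets the design condition kill every relevant moment, so that---unlike in Theorem \ref{t_upper}, where the sign constraints force an inequality---one obtains an exact identity for $E_g(C,W)$ with no restriction on the signs of the Gegenbauer coefficients $g_\ell$.
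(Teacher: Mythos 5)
Your proof is correct and follows essentially the same route as the paper: the paper's (very terse) proof cites $g \geq h$ on $[-1,s]$ together with identity \eqref{main-identity}, and your invocation of Theorem \ref{f0-designs} (whose forward direction is exactly \eqref{main-identity} with the vanishing moments) as well as your self-contained Gegenbauer-expansion argument are just that same identity spelled out. Nothing further is needed.
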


\begin{proof}
Obvious from $g \geq h$ in $[-1,s]$ and \eqref{main-identity}.
\end{proof}

\begin{theorem} \label{uub-des-thm}
Let $h$ be such that $h^{(\tau)} \geq 0$ and $(C,W)$  be a weighted 
spherical $\tau$-design on $\mathbb{S}^{n-1}$ with cardinality (size) $N$ and maximal inner product $s=s(C) \in [-1,1)$. 
Assume that $N_q=L_q(n,s)$ satisfies \eqref{int-sum} with $q=\tau=2\ell-1+\varepsilon$ and $N_q$ instead of $N_W$. Then
\begin{equation} \label{uub-formula-des}
E_h(C,W) \leq \frac{(N_W-N_q)g_T(1)}{N_WN_q}+\sum_{i=0}^{\ell-1+\varepsilon} \rho_i^{(n,s)} h(\alpha_i^{(n,s)}), 
\end{equation}
where the set $T$ of interpolation nodes and the polynomial $g_T$ are as in Theorem \ref{uub}. 
\end{theorem}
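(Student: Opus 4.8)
The plan is to mirror the construction in Theorem \ref{uub}, but to exploit the design property to discard the Levenshtein correction term entirely. Recall that in Theorem \ref{uub} the summand $-\lambda f_m^{(n,s)}(t)$ was present solely to force the Gegenbauer coefficients $g_i$ to be nonpositive for $1\le i\le m$, so that the polynomial would land in $U_h^{(n,s)}$. Here, since $(C,W)$ is a weighted $\tau$-design, Theorem \ref{t_upper_designs} replaces $U_h^{(n,s)}$ by the larger set $V_h^{(n,s,\tau)}$, in which the only constraints are $g\ge h$ on $[-1,s]$ and $\deg(g)\le\tau$; the signs of the coefficients are now irrelevant because $M_\ell(C,W)=0$ for $1\le\ell\le\tau$. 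Consequently I would simply take the feasible polynomial to be $g:=g_T$, the Hermite interpolant to $h$ at the multiset $T$ of roots of $f_m^{(n,s)}$ (this is effectively choosing $\lambda^{*}=0$).

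First I would verify $g_T\in V_h^{(n,s,\tau)}$. The degree bound is immediate: $\deg(g_T)=m-1=\tau-1\le\tau$. For the inequality $g_T\ge h$ on $[-1,s]$, I would invoke the same Hermite error formula used in the proof of Theorem \ref{uub},
\[ h(t)-g_T(t)=\frac{h^{(m)}(\xi)}{m!}(t-\alpha_0^{(n,s)})^{2-\varepsilon}(t-s)\prod_{i=1}^{k-2+\varepsilon}(t-\alpha_i^{(n,s)})^2,\qquad \xi\in(-1,1). \]
On $[-1,s]$ the factor $(t-s)$ is nonpositive, each squared factor is nonnegative, and $(t-\alpha_0^{(n,s)})^{2-\varepsilon}$ is nonnegative in both parities (a square when $\varepsilon=0$, and $(t+1)\ge0$ when $\varepsilon=1$, since then $\alpha_0^{(n,s)}=-1$). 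Together with the hypothesis $h^{(\tau)}=h^{(m)}\ge 0$, this forces $h(t)-g_T(t)\le 0$, i.e.\ $g_T(t)\ge h(t)$ on $[-1,s]$, as required.

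With feasibility in hand, Theorem \ref{t_upper_designs} yields
\[ E_h(C,W)\le (g_T)_0-g_T(1)\sum_{i=1}^N w_i^2=(g_T)_0-\frac{g_T(1)}{N_W}. \]
To recognize the right-hand side, I would apply the quadrature rule \eqref{QR} to $g_T$ with the parameter $N_1=L_m(n,s)$; this is legitimate because $\deg(g_T)=m-1$ lies within the exactness range (degree at most $m=2k-1+\varepsilon$) of \eqref{QR}. Combining \eqref{QR} with the interpolation equalities $g_T(\alpha_i^{(n,s)})=h(\alpha_i^{(n,s)})$ gives
\[ (g_T)_0=\frac{g_T(1)}{N_1}+\sum_{i=0}^{k-1+\varepsilon}\rho_i^{(n,s)}h(\alpha_i^{(n,s)}). \]
Substituting this into the bound and merging the two $g_T(1)$ terms via $1/N_1-1/N_W=(N_W-N_1)/(N_WN_1)$ produces exactly \eqref{uub-formula-des}.

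There is no genuine obstacle here: the argument is the design specialization of Theorem \ref{uub}, and the only points requiring care are the sign bookkeeping in the error formula and the observation that $\deg(g_T)$ falls within the exactness range of \eqref{QR}. Both are routine. The conceptual content is simply that the vanishing of the low moments lets us set the adjustment parameter $\lambda^{*}$ to zero, which simultaneously simplifies the bound and, since the dropped term $-\lambda^{*}f_0(1-N_1/N_W)$ was nonnegative (because $N_1\ge N_W$), makes it no larger than the general UUB of Theorem \ref{uub}.
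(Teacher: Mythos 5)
Your proposal is correct and follows essentially the same route as the paper's own proof: take $g=g_T$ (i.e., $\lambda=0$), verify feasibility in $V_h^{(n,s,\tau)}$ via the Hermite error formula using only $h^{(\tau)}\geq 0$, then apply Theorem \ref{t_upper_designs} and the quadrature rule \eqref{QR} with $N_1$ together with the interpolation equalities. The extra remarks (the sign bookkeeping and the observation that the dropped correction term $-\lambda^{*}f_0(1-N_1/N_W)$ is nonnegative) are consistent with, and slightly more detailed than, the paper's argument.
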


\begin{proof}
We naturally consider an interpolant $g$ that stays above $h$ in $[-1,s]$ so we utilize the nodes
from the multiset $T$ from \eqref{T-def} but with $q$ replaced by $\tau$. This produces the polynomial
$g=g_T$, which can be also viewed as the polynomial from \eqref{uub_pol}
with $\lambda=0$ (i.e., without the correction via the Levenshtein polynomial
$f_\tau^{(n,s)}$). Since $\deg(g)=\deg(g_T)=\tau-1$, the verification of 
$g \geq h$ in $[-1,s]$ by the Hermitte interpolation error formula
\[ h(t)-g(t)=\frac{h^{(\tau)}(\xi)}{\tau!} (t-\alpha_0)^{2-\varepsilon}(t-s)
\prod_{i=1}^{\ell-2+\varepsilon}(t-\alpha_i)^2 , \quad \xi\in(-1,1), \]
requires $h^{(\tau)}(t) \geq 0$ only.  

Therefore, $g \in  V_h^{(n,s,\tau)}$ and Theorem \ref{t_upper_designs} implies
\[ E_h(C,W) \leq (g_T)_0 - g_T(1)\sum_{i=1}^N w_i^2, \]
which yields \eqref{uub-formula-des} by the quadrature rule \eqref{QR} applied for the polynomial $g$ with $N_q$ instead of $N_W$
and the interpolation equalities $g(\alpha_i^{(n,s)})=h(\alpha_i^{(n,s)})$.
\end{proof}

\begin{remark} \label{rem-des-restr}
The conditions of Theorem \ref{uub-des-thm} impose the requirements $D(n,m)<N<N_q \leq D(n,m+1)$ which restrict the applicability of the bound \eqref{uub-formula-des} to weighted spherical designs with relatively small cardinalities (see the examples below). 
\end{remark}

We illustrate the UUB \eqref{uub-formula-des} with continuations of our two examples. 

\begin{example} \label{32u-des} Theorem \ref{uub-des-thm} can be applied to Examples 2.5 and 4.6 with $n=3$, $\tau=9$, $N=32$, 
weights $W$ as in $(C_{32},W)$, and every potential with $h^{(9)} \geq 0$ (note that all three numbers $N_W<N<N_q$ belong to the ninth 
Delsarte-Goethals-Seidel interval $(D(3,9),D(3,10)]=(30,36]$). Using again the Coulomb potential  $h(t)=1/\sqrt{2(1-t)}$ for comparison and 
computing \eqref{uub-formula-des}, we obtain
\[ E_h(C,W) \leq 0.805816 \]
for such weighted spherical designs. This bound is very close to the actual energy of $(C_{32},W)$ and the corresponding 
ULB (recall that $E_h(C_{32},W) \approx 0.8050318$ and $\mathcal{E}^h(W) \geq 0.804786$). 
The interpolation nodes and the polynomial $g_T$ itself are the same as in Example \ref{32u}. \hfill $\Box$
\end{example}

\begin{example} \label{cube+biortho-u-des}
We apply  Theorem \ref{uub-des-thm} for parameters of weighted 5-designs as in Examples \ref{cube+biortho} and \ref{cube+biortho-u}
and for potentials $h$ with $h^{(5)}(t) \geq 0$. The numbers $N$ and $N_q$ belong to the interval $(D(n,5),D(n,6)]$ for $3 \leq n \leq 5$. 
We compute the UUB for the Coulomb potential again to obtain:
\[ E_h(C,W) \leq 0.70893 \]
for every weighted 5-design (with weights as $(C_{qp},W) \subset \mathbb{S}^2$) with 14 points on $\mathbb{S}^2$, 
\[ E_h(C,W) \leq 0.58111 \]
for any (equi-weighted) 5-design with 24 points on $\mathbb{S}^3$, and 
\[ E_h(C,W) \leq 0.500221 \]
for every weighted 5-design (with weights as $(C_{qp},W) \subset \mathbb{S}^4$) with 42 points on $\mathbb{S}^4$.
These values are very close to the actual energy and the corresponding ULBs (cf. Table 4). \hfill $\Box$
\end{example}

\noindent{\bf Acknowledgments.}  
The research of the second author was supported, in part, by Bulgarian NSF grant KP-06-N72/6-2023. 
The research of the third author is supported, in part, by the Lilly Endowment.
The research of the sixth author was supported, in part, by Contract BG-RRP-2.004-0008, Sofia University Marking Momentum for Innovation and Technological Transfer (SUMMIT), Work group 3.2.1. Numerical Analysis, Theory of Approximations and Their Applications (NATATA).

 The authors thank an anonymous referee for comments that significantly improved the exposition.

\end{document}